\newtheorem{theorem}{Theorem}[section]
\newtheorem{lemma}[theorem]{Lemma}
\newtheorem{proposition}[theorem]{Proposition}
\newtheorem{assumption}[theorem]{Assumption}
\theoremstyle{definition}
\theoremstyle{remark}
\newtheorem{remark}[theorem]{Remark}
\theoremstyle{remark}
\newtheorem{example}{Example}[section]
\numberwithin{equation}{section}
\def\fddto{\xrightarrow{\textit{f.d.d.}}}
\newcommand{\ind}{{\bf 1}}
\def\inddd#1{{\ind}_{\left\{#1\right\}}} 
\newcommand{\proba}{\mathbb P}
\newcommand{\esp}{{\mathbb E}}
\newcommand{\inv}{^{-1}}
\newcommand{\cov}{{\rm{Cov}}}
\newcommand{\eqnh}{\begin{eqnarray*}}
\newcommand{\eqne}{\end{eqnarray*}}
\newcommand{\eqnhn}{\begin{eqnarray}}
\newcommand{\eqnen}{\end{eqnarray}}
\newcommand{\equh}{\begin{equation}}
\newcommand{\eque}{\end{equation}}
\def\summ#1#2#3{\sum_{#1 = #2}^{#3}}
\def\prodd#1#2#3{\prod_{#1 = #2}^{#3}}
\def\sif#1#2{\sum_{#1=#2}^\infty}
\newcommand{\eqd}{\stackrel{d}{=}}
\newcommand{\PPP}{{\rm PPP}}
\def\topp#1{^{(#1)}}
\def\nn#1{{\left\|#1\right\|}}
\def\abs#1{\left|#1\right|}
\def\sabs#1{|#1|}
\def\ccbb#1{\left\{#1\right\}} 
\def\pp#1{\left(#1\right)}
\def\mmid{\;\middle\vert\;}
\def\floor#1{\left\lfloor #1 \right\rfloor}
\def\sfloor#1{\lfloor #1 \rfloor}
\def\vv#1{{\boldsymbol #1}}
\def\qmand{\quad\mbox{ and }\quad}
\def\qmwith{\quad\mbox{ with }\quad}
\def\mfa{\mbox{ for all }}
\def\wt#1{\widetilde{#1}}
\def\what#1{\widehat{#1}}
\def\limn{\lim_{n\to\infty}}
\def\limsupn{\limsup_{n\to\infty}}
\def\weakto{\Rightarrow}
\def\R{{\mathbb R}}
\renewcommand{\S}{{\mathbb S}}
\def\N{{\mathbb N}} 
\def\B{{\mathbb B}}
\newcommand{\M}{{\mathbb M}}
\newcommand{\calE}{{\mathcal E}}
\newcommand{\calF}{{\mathcal F}}
\newcommand{\calH}{{\mathcal H}}
\newcommand{\calM}{\mathcal{M}}
\newcommand{\calS}{\mathcal {S}}
\newcommand{\calX}{\mathcal{X}}
\renewcommand{\d}{{\rm d}}
\newcommand{\dd}{{\mathsf d}}
\newcommand{\MM}{{\mathsf M}}
\date{}
\def\R{\mathbb{R}}
\def\N{\mathbb{N}}
\def\1{\mathds{1}}
\renewcommand{\i}{{\rm i}}
\newcommand{\odd}{{\rm~is~odd}}
\newcommand{\even}{{\rm~is~even}}
\title{A family of log-correlated Gaussian processes}
\author{Yizao Wang}
\address
{
Yizao Wang\\
Department of Mathematical Sciences\\
University of Cincinnati\\
2815 Commons Way\\
Cincinnati, OH, 45221-0025, USA.
}
\email{yizao.wang@uc.edu}
\begin{document}\sloppy
\begin{abstract}
A family of log-correlated Gaussian processes indexed by metric spaces is introduced, when the metric is conditionally negative definite. These processes arise as the limit of bi-fractional Brownian motions indexed by $(H,K)$ scaled by $K^{-1/2}$ as $K\downarrow 0$ with $H\in(0,1/2]$ fixed. When the metric is in addition a measure definite kernel, stochastic-integral representations of the generalized processes when evaluated at a test function are provided. The introduced processes are also shown to be the scaling limits of certain aggregated models.
\end{abstract}
\maketitle
\section{Overview}

Log-correlated Gaussian processes have received extensive attentions recently in probability theory and related fields. Such processes defined on a locally compact metric space $(\MM,\dd)$ are generalized processes with covariance kernel in the form of
\[
\Gamma(x,y) = \log\frac C{\dd(x,y)} + g(x,y), \quad x,y\in\MM,
\]
where $g(x,y)$ is a continuous function on $\MM\times\MM$. The corresponding Gaussian process is not point-wise defined since $\Gamma(x,x) = \infty$. In the seminal work, \citet{kahane85chaos} established a general theory of building the Gaussian multiplicative chaos starting from such kernels. Many developments have followed recently. See the survey of \citet{rhodes14gaussian} and the forthcoming monograph by \citet{berestycki24gaussian}.  
In particular, many efforts have been devoted to specific log-correlated Gaussian processes, among which the most prominent example has been the Gaussian free fields \citep{berestycki24gaussian,sheffield07gaussian}. At the same time, log-correlated Gaussian processes and Gaussian multiplicative chaos have been shown to arise from limit theorems in random matrix theory~\citep{hughes01characteristic,chhaibi19circle,webb15characteristic} and mathematical finance~\citep{bacry03log,neuman18fractional,forde22riemann}. Most of the time, $\MM$ is either a domain of $\R^n$, or the circle $\S^1$. See also the survey by \citet{duplantier17log}.

The goal of this paper is to introduce a family of log-correlated Gaussian processes indexed by metric spaces with a covariance function explicitly determined by the metric $\dd$ and a parameter $H\in(0,1/2]$. For Brownian motion, Paul L\'evy already investigated the question of defining Brownian motion indexed by other spaces than $\R$~\citep{levy65processus,molchan67some,gangolli67positive}. Later on, the question of defining fractional Brownian motions indexed by  other spaces including 
Euclidean spaces $\R^d$, Euclidean spheres $\S^d$, and hyperbolic spaces $\mathbb H^d$ with Hurst index $H\in(0,1/2)$ has been addressed, and these processes indexed by metric spaces can be characterized as centered Gaussian processes $G^H$ with $\esp(G^H(x) - G^H(y))^2 = \dd^{2H}(x,y)$ and $G^H(o) = 0$ for some $o\in\MM$~\citep{istas06fractional,istas05spherical,istas12manifold,cohen12stationary}. 
A closely related question is the existence of the so-called {\em bi-fractional Brownian motions} indexed by metric spaces. A bi-fractional Brownian motion is a Gaussian process with two parameters that we let $H,K$ denote throughout. We shall see that for a large family of examples with $H\in(0,1/2]$, a log-correlated Gaussian process arises as the simple limit of bi-fractional Brownian motions with parameters $(H,K)$ normalized by $K^{-1/2}$ as $K\downarrow 0$. This, actually, follows from a simple calculation that we now explain.
\subsection{From the bi-fractional Brownian motion to a log-correlated Gaussian process}  
We let $G^{H,K} = (G^{H,K}(x))_{x\in\MM}$ denote the bi-fractional Brownian motion  indexed by a metric space $\MM$, and $H>0,K>0$ are two parameters that we shall discuss in a moment. This is a centered Gaussian process with covariance function
\equh\label{eq:cov G}
\cov\pp{G^{H,K}(x),G^{H,K}(y)} = \frac1{2^K}\pp{\pp{\dd^{2H}(o,x)+\dd^{2H}(o,y)}^K - \dd^{2HK}(x,y)}, x,y\in\MM,
\eque
where $o\in\MM$ is fixed and plays the role of the origin.  
Bi-fractional Brownian motion was first introduced by \citet{houdre03example} with $\MM = \R$ (and in this case $G^{H,K}$ is well-defined with $H\in(0,1], K\in(0,1]$ and $o = 0$). In particular, $G^{H,1}$ is just a fractional Brownian motion with Hurst index $H\in(0,1]$ ($H=1$ is a degenerate case).  
The exact range of legitimate  $(H,K)$ for the existence of $G^{H,K}$ remains an open question \citep{talarczyk20bifractional} even when $\MM = \R$ and $\dd$ is the Euclidean metric; see also \citet{lifshits15bifractional}. 

Since then, bi-fractional Brownian motions indexed by various metric spaces have been studied; see \citet{ma24bifractional}. In particular, when setting $K=1$ one always recover a fractional Brownian motion indexed by the same metric space with Hurst index $H$. 
It is important to keep in mind that the legitimate choice of $(H,K)$ depends on $(\MM,\dd)$. As a general setup we assume that $\dd$ is conditionally negative definite (a.k.a.~of negative type), and in this case it is easy to check that the function on the right-hand side of \eqref{eq:cov G} is positive definite and hence $G^{H,K}$ is well-defined with 
\[
H\in(0,1/2]\qmand K\in(0,1].
\]
 See \citet[Theorem 1.1, (vii)]{ma24bifractional}. In particular, $G^{H,1}$ is a fractional Brownian motion with Hurst index $H$ and $G^{1/2,1}$ is a Brownian motion.  
 
 Then, with $H$ fixed we have
\equh\label{eq:limit K}
\lim_{K\downarrow 0}\frac1K\cov\pp{G^{H,K}(x,y)} = \log\frac{\dd^{2H}(o,x)+\dd^{2H}(o,y)}{\dd^{2H}(x,y)}=:\Gamma\topp {H}(x,y), x,y\in\MM.
\eque
This simple calculation deserves to be stated in a proposition.
\begin{proposition}
If $(\MM,\dd)$ is a metric space and $\dd$ is of negative type, then for all $H\in(0,1/2]$, $\Gamma\topp{H}$ in \eqref{eq:limit K} is a covariance kernel for a generalized Gaussian process indexed by $\MM$. 
\end{proposition}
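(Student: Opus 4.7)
The plan is to exhibit $\Gamma\topp{H}$ as a pointwise limit of honest positive definite kernels, and then to transport that positive definiteness to the bilinear form on a natural class of test measures; existence of a generalized Gaussian process with covariance $\Gamma\topp{H}$ will then follow by Kolmogorov's extension theorem.

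First I would recall that for each fixed $K\in(0,1]$, the hypotheses that $\dd$ is of negative type and $H\in(0,1/2]$ guarantee via \citet[Theorem 1.1(vii)]{ma24bifractional} that the bi-fractional Brownian motion $G^{H,K}$ exists on $\MM$. Consequently its covariance $C^{H,K}(x,y)$ in \eqref{eq:cov G} is positive definite on $\MM\times\MM$, and so is $K^{-1}C^{H,K}$, being the covariance of the rescaled process $K^{-1/2}G^{H,K}$. The pointwise convergence recorded in \eqref{eq:limit K} then reads $K^{-1}C^{H,K}(x,y)\to\Gamma\topp{H}(x,y)$ as $K\downarrow 0$ for every $x,y\in\MM$ with $\dd(x,y)>0$.

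Second, for any finite signed Borel measure $\nu$ compactly supported in $\MM\setminus\ccbb{o}$, positive definiteness of the pre-limit kernel yields
\[
\frac{1}{K}\int_\MM\int_\MM C^{H,K}(x,y)\,d\nu(x)d\nu(y) \;=\; \frac{1}{K}\var\pp{\int_\MM G^{H,K}(x)\,d\nu(x)} \;\geq\; 0 \qmfa K\in(0,1],
\]
and I would pass to the limit $K\downarrow 0$ to conclude $\int_\MM\int_\MM\Gamma\topp{H}(x,y)\,d\nu(x)d\nu(y)\geq 0$. By polarization this upgrades to positive semi-definiteness of the symmetric bilinear form $(\nu_1,\nu_2)\mapsto\int\int\Gamma\topp{H}\,d\nu_1d\nu_2$ on the class of such test measures, and Kolmogorov's extension theorem then produces a centered Gaussian random linear functional $X(\cdot)$ on this class with $\cov(X(\nu_1),X(\nu_2))=\int\int\Gamma\topp{H}\,d\nu_1d\nu_2$, which is the advertised generalized Gaussian process indexed by $\MM$.

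The only non-routine step will be justifying the limit passage, and I expect dominated convergence to be the tool. From the elementary inequality $1-e^{-t}\leq t$ one gets $\abs{(a^K-b^K)/K}\leq (a\vee b)^K\abs{\log(a/b)}$ for all $a,b>0$ and $K\in(0,1]$, which in turn dominates $\abs{K^{-1}C^{H,K}(x,y)}$, uniformly in $K\in(0,1]$ and on the compact support of $\nu$ in $\MM\setminus\ccbb{o}$, by a constant multiple of $1+\abs{\log\dd(x,y)}+\abs{\log\dd(o,x)}+\abs{\log\dd(o,y)}$. This majorant is $\abs\nu\otimes\abs\nu$-integrable since the logarithmic singularity is locally integrable away from $o$, so dominated convergence applies and the argument closes.
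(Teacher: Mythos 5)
Your overall strategy---realizing $\Gamma\topp H$ as the pointwise limit of the positive definite kernels $K^{-1}C^{H,K}$ and transferring positivity to a quadratic form over test measures, then invoking Kolmogorov---is exactly the calculation the proposition rests on, and your first two steps are fine. The genuine gap is in the step you yourself flag as the only non-routine one: the justification of the limit passage, and with it the specification of the test class. First, the kernel has no singularity at $o$ at all: $C^{H,K}(o,y)=0$ for every $K$ and $\Gamma\topp H(o,y)=\log 1=0$, so requiring $\nu$ to be supported in $\MM\setminus\ccbb{o}$ removes nothing. The real singularity of your majorant $1+\abs{\log\dd(x,y)}+\abs{\log\dd(o,x)}+\abs{\log\dd(o,y)}$ sits on the diagonal $\ccbb{x=y}$, where $\Gamma\topp H=+\infty$, and its $\abs\nu\otimes\abs\nu$-integrability is not a general fact but a genuine restriction on $\nu$: it fails for every measure with an atom, and on a general metric space (indeed already on $\R$) it can fail even for atomless compactly supported measures, e.g.\ any measure carried by a compact set of zero logarithmic capacity. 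So the sentence ``the logarithmic singularity is locally integrable away from $o$'' does not justify dominated convergence; as written, your index class is simultaneously restricted in an irrelevant way (away from $o$) and too large (it contains measures for which the limiting quadratic form is not even defined).

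The repair is to build the finiteness into the definition of the test class rather than trying to prove it: take finite signed measures $\nu$ of compact support with $\int_\MM\int_\MM\abs{\log\dd(x,y)}\,\d\abs\nu(x)\,\d\abs\nu(y)<\infty$ (with the convention that this excludes atoms), i.e.\ measures of finite logarithmic energy --- this is the role played later in the paper by the classes $\calF_\delta$ and Assumption \ref{assump:2}. On such measures your elementary bound $\sabs{a^K-b^K}/K\le(a\vee b)^K\abs{\log(a/b)}$ does give a majorant of the form $C\,\Gamma\topp H(x,y)$ with $C$ depending only on the support, dominated convergence applies, and the remainder of your argument (nonnegativity of the limit form, polarization, Gaussian existence) goes through and coincides with the paper's intended one-line proof via \eqref{eq:limit K}.
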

We let $G^H$ denote the so-introduced Gaussian process, and name it as the {\em log-correlated bi-fractional Brownian motion} with parameter $H\in(0,1/2]$. 

Our first main contribution is to provide  a stochastic-integral representation of $G^H$. Strictly speaking, we represent $G^H(f)$ (when evaluated at a suitable test function $f:\MM\to\R$) in the form of $G^H(f) = \int_\MM f(x)M(\d x)$ where $M$ is a suitable Gaussian random measure.  
For this purpose, we shall impose a further condition on $\dd$ being of negative type. 

It has been recently shown that when $\dd$ is a {\em measure definite kernel}, a notion from \citet{robertson98negative}, not only a fractional Brownian motion indexed by $\MM$ with Hurst index $H\in(0,1/2)$ can be defined, but also it has a stochastic-integral representation of Chentsov-type \citep{samorodnitsky94stable}. Our results are closely related to this unified framework for fractional Brownian motions indexed by metric spaces \citep{fu20stable}. The metric $\dd$ being a measure definite kernel is also our key assumption here.

\begin{assumption}\label{assump:1}
The metric $d$ on $\MM$ is a {\em measure definite kernel}, in the sense that there exists a measure space $(E,\calE,\mu)$ and a family of sets $\{A_x\}_{x\in \MM}\subset\calE$ such that $\mu(A_x)<\infty$ for all $x\in\MM$ and 
\equh\label{eq:MDK}
\dd(x,y) = \mu(A_x\Delta A_y), \mfa x,y \in \MM,
\eque
where $A\Delta B = (A\setminus B)\cup (B\setminus A)$.
\end{assumption}

Recall that if $\dd$ is of negative type then there exists a Gaussian process $(G(x))_{x\in\MM}$ such that $\esp(G(x)-G(y))^2 = \dd(x,y)$.  Every measure definite kernel is of negative type, but the opposite is not true. It has been well-known that if $\dd$ is of negative type, then so is $\dd^\beta$ for all $\beta\in(0,1)$. A brief review on metrics of negative type, measure definite kernels and their induced Gaussian processes can be found in \citet{istas12manifold}. It has been recently shown that if $\dd$ is a measure definite kernel, then so is $\dd^\beta$ for all $\beta\in(0,1)$ \citep{fu20stable}.

The first main result is the following.
\begin{theorem}\label{thm:0}
 If Assumption \ref{assump:1} is satisfied, then for $H\in(0,1/2]$, 
 \[
\Gamma\topp {H}(x,y) =  \log\frac{\mu^{2H}(A_x)+\mu^{2H}(A_y)}{\dd^{2H}(x,y)}, x,y\in\MM, \beta\in(0,1],
 \]
 is the covariance kernel of a generalized Gaussian process on $\MM$. 
More precisely, we have
 \equh\label{eq:Gamma_r}
 \Gamma\topp H(x,y) = \int_0^\infty \Gamma_r\topp H(x,y)\d r
 \eque
 where for each $r>0$, $\Gamma_r\topp H:\MM\times\MM\to\R$ is a positive definite function. 
\end{theorem}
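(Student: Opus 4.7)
My plan has three steps: \emph{(i)} use the measure-definite hypothesis to realize $\dd^{2H}$ as a symmetric difference; \emph{(ii)} apply Frullani's identity to produce \eqref{eq:Gamma_r}; \emph{(iii)} show each $\Gamma_r\topp H$ is positive definite via a Gram-series expansion of $e^{2r\nu(B_x\cap B_y)}-1$.

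For (i), the paper's discussion (citing \cite{fu20stable}) gives that $\dd^{2H}$ is itself a measure definite kernel for $H\in(0,1/2)$, while the case $H=1/2$ is Assumption~\ref{assump:1} itself; in either case there exist $(F,\calF,\nu)$ and sets $\sccbb{B_x}_{x\in\MM}\subset\calF$ with $\dd^{2H}(x,y)=\nu(B_x\Delta B_y)$. Replacing $B_x$ by $B_x\Delta B_o$, which leaves all pairwise symmetric differences invariant by associativity of $\Delta$, I may assume $B_o=\emptyset$, so $\nu(B_x)=\dd^{2H}(o,x)=\mu(A_x)^{2H}$. For (ii), since $\dd^{2H}$ is still a metric ($2H\in(0,1]$), $\nu(B_x)+\nu(B_y)\geq\nu(B_x\Delta B_y)>0$ for $x\neq y$, so Frullani's identity $\log(a/c)=\int_0^\infty r\inv(e^{-rc}-e^{-ra})\,\d r$ with $a=\nu(B_x)+\nu(B_y)$ and $c=\nu(B_x\Delta B_y)$ yields \eqref{eq:Gamma_r} with
\[
\Gamma_r\topp H(x,y)\;:=\;\frac{e^{-r\nu(B_x\Delta B_y)}-e^{-r\pp{\nu(B_x)+\nu(B_y)}}}{r}.
\]

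For (iii), using the set-algebra identity $\nu(B_x\Delta B_y)=\nu(B_x)+\nu(B_y)-2\nu(B_x\cap B_y)$ I factor
\[
r\,\Gamma_r\topp H(x,y)\;=\;e^{-r\nu(B_x)}\,e^{-r\nu(B_y)}\bigl(e^{2r\nu(B_x\cap B_y)}-1\bigr),
\]
then expand the remaining exponential in its Taylor series and observe
\[
\nu(B_x\cap B_y)^k\;=\;\int_{F^k}\prod_{j=1}^k\1_{B_x}(z_j)\,\1_{B_y}(z_j)\,\nu^{\otimes k}(\d z_1\cdots\d z_k)\;=\;\ip{\1_{B_x}^{\otimes k},\1_{B_y}^{\otimes k}}_{L^2(\nu^{\otimes k})}.
\]
Each term of the resulting series is a Gram inner-product kernel in $(x,y)$, hence positive semi-definite; multiplying by the rank-one kernel $e^{-r\nu(B_x)}e^{-r\nu(B_y)}$ (Schur product) and summing with the non-negative weights $(2r)^k/k!$ preserves positive semi-definiteness. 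Thus $\Gamma_r\topp H$ is positive definite for every $r>0$, and since each $\Gamma_r\topp H$ is pointwise non-negative, Tonelli's theorem lifts this through \eqref{eq:Gamma_r} so that $\Gamma\topp H$ is a positive definite kernel in the distributional sense (the logarithmic singularity on the diagonal is absorbed by pairing against test functions). The conceptual heart of the argument is the Gram expansion in step (iii), which converts the set-algebra identity for $\nu(B_x\Delta B_y)$ into a manifest positive-definite decomposition; I foresee no serious technical obstacle beyond this.
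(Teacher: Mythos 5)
Your steps (ii) and (iii) are fine — Frullani plus the Gram/Schur expansion of $e^{2r\nu(B_x\cap B_y)}-1$ is a correct, and rather elegant, deterministic substitute for the paper's probabilistic argument (the paper realizes $\Gamma_r\topp H$ directly as the covariance of centered indicators $\ind\{m(A_x)\odd\}$ for a subordinated Poisson process, so positive definiteness is automatic; see Theorem \ref{thm:Kahane}). The genuine gap is in step (i), in the identification $\nu(B_x)=\dd^{2H}(o,x)=\mu(A_x)^{2H}$. After you replace $B_x$ by $B_x\Delta B_o$ you do get $\nu(B_x)=\dd^{2H}(o,x)=\mu^{2H}(A_o\Delta A_x)$, but this equals $\mu(A_x)^{2H}$ only when $\mu(A_o)=0$, and Assumption \ref{assump:1} does not provide such a point $o$. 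The paper's Example \ref{example:1} (the rotation-invariant choice $A_x=\calH_x$ on $\S^n$, with $\mu(A_x)\equiv\pi/2$) is exactly a case where no admissible base point exists: there the theorem's kernel is $\log\bigl(2(\pi/2)^{2H}/\dd^{2H}(x,y)\bigr)$, whereas your argument produces $\log\bigl((\dd^{2H}(o,x)+\dd^{2H}(o,y))/\dd^{2H}(x,y)\bigr)$, which is a different kernel (essentially the one in \eqref{eq:limit K}, i.e.\ Proposition 1.1, not Theorem \ref{thm:0}). Note also that the base-point shift is what breaks things even in the case $H=1/2$: there you should simply take $B_x=A_x$, $\nu=\mu$, with no shift, and your step (iii) then goes through verbatim.

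The fix for $H<1/2$ is not to take an arbitrary measure-definite representation of $\dd^{2H}$, but one that simultaneously satisfies $\nu(B_x\Delta B_y)=\dd^{2H}(x,y)$ \emph{and} $\nu(B_x)=\mu(A_x)^{2H}$. Such a representation exists and is exactly what the subordination construction delivers: take $E'=(0,\infty)\times\M_p(E)$ with $\nu(\d r,\d m)=c\,r^{-2H-1}\,\d r\,\proba_{r\mu}'(\d m)$ ($\proba_{r\mu}'$ the law of the Poisson process with intensity $r\mu$) and $B_x=\{(r,m):m(A_x)\odd\}$; then $\nu(B_x)=\mu(A_x)^{2H}$ and $\nu(B_x\Delta B_y)=\mu(A_x\Delta A_y)^{2H}=\dd^{2H}(x,y)$ after normalizing $c$. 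This is in substance the mechanism the paper uses (via the stable subordinator $\calS_{2H,r}$) in its proof of Theorem \ref{thm:Kahane}; with that replacement for your step (i), your Frullani--Gram argument does prove the stated theorem, and is a genuinely different (analytic) route to the positive definiteness of each $\Gamma_r\topp H$.
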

We shall provide an explicit expression for $\Gamma_r\topp H$, which is essentially based on a stochastic-integral representation of $G(f)$, in Section \ref{sec:general}, Theorem \ref{thm:Kahane}.

\begin{remark}
Note that $\Gamma\topp H$ in \eqref{eq:Gamma_r} is a $\sigma$-positive kernel in the sense of \citet{kahane85chaos}, a property that is not easy to check in practice. 
The $\sigma$-positivity is a consequence of the metric $\dd$ being a measure definite kernel (see \eqref{eq:sigma positive}). 
Thanks to this property, the existence of the Gaussian multiplicative chaos determined by $\Gamma$ follows immediately from Kahane's framework. We leave the investigations of so-obtained Gaussian multiplicative chaos for future studies.
\end{remark}

Below we recall a few examples satisfying Assumption \ref{assump:1}. If there is a point $o\in\MM$ such that $\mu(A_o) = 0$, then the kernel simplifies to $\Gamma\topp H$ in \eqref{eq:limit K}.
We also provide an example that satisfies Assumption \ref{assump:1} but cannot be obtained as the limit of bi-fractional Brownian motions in Example \ref{example:1} below.

\begin{example}\label{example:d=1}
In the case $\MM = \R$, we take $A_x = [0,x]$ if $x\ge 0$ and $(-x,0]$ if $x<0$, and $\mu$ as the Lebesgue measure. Then, we have
\equh\label{eq:Gamma d=1}
\Gamma\topp H(x,y) = \log\frac{|x|^{2H}+|y|^{2H}}{|x-y|^{2H}}, x,y\in\R.
\eque
Obviously, when $H = 1/2$, $\Gamma(x,y) = 0$ if $x<0<y$, which means that the restrictions of the process to $(-\infty,0)$ and $(0,\infty)$ respectively are independent. In Sections \ref{sec:d=1} and \ref{sec:CLT} we shall focus on this example with $H= 1/2$ and restricted to $[0,\infty)$. 
\end{example}
\begin{example}
For $\MM = \R^n$, 
it is well-known that the Euclidean metric $\dd$ is a measure-definite kernel. Indeed, one can take $E:=\{(s,r):s\in\S^n, r\in(0,\infty)\}$ representing the space of all hyperplanes of $\R^n$ not including the origin, $A_x = \{(s,r):s\in\S^n, 0<r<\langle s,x\rangle\}$ the collection of all hyperplanes separating $0$ and $x$ in $\R^n$, and $\mu(\d s\d r) = \d s\d r$ ($\d s$ is the Lebesgue measure on $\S^n$, $\d r$ is the Lebesgue measure on $\R^1$, such that \eqref{eq:MDK} holds. In fact, this representation of the measure definite kernel is at the heart of earlier Chentsov representation for fractional L\'evy Brownian fields \citep[Chapter 8.3]{samorodnitsky94stable}. So the covariance kernel is
\[
\Gamma\topp H(x,y) =\log\frac{\nn {x}^{2H}+\nn{y}^{2H}}{\nn{x-y}^{2H}}, \quad x,y\in\R^n.
\]
\end{example}

\begin{example}\label{example:1}
In the case $\MM = \S^n$ and $\dd$ is the geodesic, we can take $\calH_x$ as the hemisphere of $\S^n$ centered as $x\in\S^n$,  $o\in\S^n$ a fixed point, $A_x := \calH_x\Delta  \calH_o, x\in\S^n$ and $\mu(\d x) = \pi \d x/w_n$ where $w_n$ is the surface area of $\S^n$. In this way, \eqref{eq:MDK} and \eqref{eq:limit K} follow.

The above choice is well-known when studying the spherical fractional Brownian motion. Note that the spherical fractional Brownian motion and the corresponding log-correlated Gaussian process are not rotation-invariant. For a rotation-invariant model, one can consider simply $A_x = \calH_x$. This time, we still have $\dd(x,y) = \mu(\calH_x\Delta \calH_y)$ (since $(\calH_x\Delta \calH_o)\Delta(\calH_y\Delta \calH_o) = \calH_x\Delta \calH_y$). But now we have $\mu(\calH_x) = \pi/2$ (compared with $\mu(A_x) = \dd (o,x)$) and hence the corresponding generalized Gaussian process has covariance kernel
\[
\Gamma\topp H(x,y) = \log\frac{2(\pi/2)^{2H}}{\dd^{2H}(x,y)}. 
\]
\end{example}
For other examples, see \citep[Section 2.8]{istas12manifold}.

The fact that $G^H(f)$ has  a stochastic-integral representation with $f$ from a suitable family of test functions yields immediately that the generalized Gaussian process $G$ can be extended to generalized infinitely-divisible processes \citep{samorodnitsky16stochastic} by replacing in the stochastic-integral representation the Gaussian random measure by infinitely-divisible ones (see \citep{bacry03log,rhodes10multidimensional} for a closely related example in a similar spirit). We do not pursue the full generality in this direction here. Instead, we explain that the extension to stable ones is almost free.  Last but not least, as the second main contribution we provide limit theorems for the introduced log-correlated bi-fractional Brownian motions. 

{\bf The paper is organized as follows.}  A unified treatment for general $(\MM,\dd)$ satisfying Assumption \ref{assump:1} and with $H\in(0,1/2]$ is provided in Section \ref{sec:general}. However, the presentation might be a little too abstract for the first time reading. To help illustrate, we shall focus on Example \ref{example:d=1}, the log-correlated bi-fractional Brownian motion on $\R_+ =[0,\infty)$ with covariance function \eqref{eq:Gamma d=1} and $H = 1/2$. For this case, in Section \ref{sec:d=1} we focus on stochastic-integral representations, and in Section \ref{sec:CLT} as the second main contribution we provide a central limit theorem based on a variation of a model by \citet{enriquez04simple} when studying the fractional Brownian motion. This is also the case closely related to a few very recent developments in the literature; see Remarks \ref{rem:NR} and \ref{rem:Karlin}. In Section \ref{sec:general}, after presenting the results in the general framework, we also provide a central limit theorem for $G$ based on another model similar to the one in \citet{fu20stable}.

\subsection{Related results in the literature}
We conclude the introduction with a few remarks.

\begin{remark}\label{rem:NR}
In an inspiring paper, \citet{neuman18fractional} considered to characterize the limit process of fractional Brownian motions, denoted by $\B^H$, as $H\downarrow 0$.
Therein, the authors investigated
\equh\label{eq:NR'}
X_H(t):=\frac1{\sqrt H}\pp{\B^H_t - \frac 1t\int_0^t\B_s^H\d s},
\eque
provided an argument on why a shift also makes sense in practice in finance applications, and then showed the limit of $X_H$ is a log-correlated Gaussian process. Later on, \citet{hager22multiplicative} showed that the random shift $t\inv\int_0^t\B_s^H\d s$ can be relaced by a larger family of processes and the limits remain to be  log-correlated (and also they considered $\R^d$-indexed random fields for $d\in\N$). Our process corresponds to a specific choice of the shift, essentially related to the decomposition of a fractional Brownian motion by a bi-fractional Brownian motion and a smooth Gaussian process due to \citet{lei09decomposition}. We provide more details in Remark \ref{rem:NR'}.
\end{remark}
\begin{remark}\label{rem:Karlin}
The limit process $G^{1/2}$ introduced here can already be read from recent developments by \citet{iksanov22small} on the Karlin model \citep{karlin67central,gnedin07notes}, and it is from their results we first had the guess of the expression of $G^{1/2}$. 
The Karlin model is an infinite urn model of which the law of the sampling over $\N$ decays as $j^{-1/{2H}}, H\in(0,1/2)$ as $j\to\infty$. The model was introduced by \citet{karlin67central}, who showed that the partial sum of so-called odd-occupancy process (the counting process of urns occupied by an odd number of balls), normalized by $n^H$, scales to a Gaussian random variable. The consideration of {\em odd-occupancy process} was inspired by  \citet{spitzer64principles} to modeling the on/off events for lightbulbs. It is known that randomized odd-occupancy processes scale to a fractional Brownian motion with Hurst index $H\in(0,1/2)$ \citep{durieu16infinite}, and this type of limit theorems has been extended to stable processes indexed general metric spaces \citep{durieu20infinite,fu20stable}. The odd-occupancy mechanism is reflected in the stochastic-integral representations (see \eqref{eq:fBm SI} below), and also in our representation for log-correlated bi-fractional Brownian motions.

The Karlin model with $H=0$ has been recently investigated by \citet{iksanov22functional} and \citet{iksanov22small}. Limit theorems have been established in this regime with new Gaussian processes in the limit in the aforementioned papers.
Their representation is not immediately recognizable to be related to ours and we provide some details in Appendix \ref{sec:Sasha}. It should be expected that from their model a counterpart of our Theorem \ref{thm:1} can be established (they examined $j$-occupancy processes instead of odd-occupancy processes). 
\end{remark}

\begin{remark}
 \citet{forde22riemann} considered the limit of re-scaled Riemann--Liouville process  and showed that the limit is a generalized Gaussian process with covariance function 
\[
\log{\frac{\sqrt t+\sqrt s}{\sqrt t-\sqrt s}}, \quad 0\le s<t<\infty.
\] 
Their limit theorem is in the same spirit as \citet{neuman18fractional}.

\citet{lifshits15bifractional} also considered a limit theorem for $G^{H,K}$ as $K\downarrow 0$, although a Lamperti's transformation is first applied to $G^{H,K}$ so that it is transformed to a stationary process before the limit is taken. A generalized process again arises in the limit.

Another example of a fractional Brownian motion with Hurst index $H = 0$ has been seen in the studies of random matrix theory \citep{fyodorov16fractional,lambert18mesoscopic}. The corresponding example is a centered self-similar Gaussian process with stationary increments. 
\end{remark}

\subsection*{Acknowledgements}
Y.W.~is grateful to Mikhail Lifshits for several stimulating discussions on an earlier version of the paper, which lead to an enhancement of the results and a major revision of the paper. Y.W.~thanks also Alexander Iksanov, Joseph Najnudel, and Yi Shen for several discussions. 
 Y.W.~was partially supported by Simons Foundation (MP-TSM-00002359), and a Taft Center Fellowship (2024--2025) from Taft Research Center at University of Cincinnati.

\section{A stochastic-integral representation}\label{sec:d=1}

In this and the next sections we consider $\MM = [0,\infty)$, $\dd(x,y) = |x-y|$. In most of this section we focus $H=1/2$ in the parameter of the bi-fractional Brownian motion except in Section \ref{sec:bifBm}. 

Our reference for stochastic integrals with respect  to stable random measures is \citet{samorodnitsky94stable}: if $(E,\calE,\mu)$ is a measure space, $M_\alpha$ is a symmetric $\alpha$-stable random measure on $(E,\calE)$ with control measure $\mu$,  then the {\em stochastic integral $\int f\d M_\alpha$ with control measure $\mu$} is defined as a symmetric $\alpha$-stable random variable with 
 \equh\label{eq:ch.f}
 \esp \exp\pp{\i \theta \int f\d M_\alpha} = \exp\pp{-|\theta|^\alpha\int |f|^\alpha\d\mu}, \mfa f\in L^\alpha(E,\mu), \theta\in\R.
 \eque
 We first focus on $\alpha=2$, and in this case $\int f\d M_2$ is a centered Gaussian random variable and $\cov(\int f\d M_2,\int g\d M_2) = 2\int fg\d \mu$ for $f,g\in L^2(E,\mu)$. (Note that the multiplicative constant $2$ is introduced to fit into the unified formula in \eqref{eq:ch.f}.)

Let $(\Omega,\calF,\proba)$ be the by-default probability space that the random measures are defined on. Set $E = \R_+\times\Omega'$ where $(\Omega',\calF',\proba')$ is another probability space on which a standard  Poisson process $N'$ is defined, and $M_{2}$ is a Gaussian random measure on $\R_+\times\Omega'$ with control measure $2 r\inv\d r\d\proba'$. Write
\[
\wt p_r(t) = \proba'(N'(rt)\odd) = \frac12\pp{1-e^{-2rt}}.
\]
We introduce
\equh\label{eq:G(f)}
G(f) :=\int_{\R_+\times\Omega'}\int_{\R_+}\pp{\inddd{N'(rt)\odd} - \wt p_r(t)}f(t)\d tM_{2}(\d r,\d\omega'),
\eque
and\equh\label{eq:F_0}\calF_0:=\ccbb{f\in L^\infty([0,\infty)): \int_1^\infty |f(s)|\log s\d s<\infty}.
\eque

\begin{lemma}
The collection $\{G(f)\}_{f\in \calF_0}$ is a family of centered Gaussian random variables with covariance function 
\[
\cov(G(f),G(g)) = \int_0^\infty\int_0^\infty f(s)g(t)\log\frac{s+t}{|s-t|}\d s\d t, \quad \mfa f,g\in \calF_0,
\]
that also satisfies
\equh\label{eq:linearity}
G(a_1f_1+a_2f_2) = a_1G(f_1)+a_2G(f_2) \mbox{ almost surely, for all } a_1,a_2\in\R, f_1,f_2\in\calF_0.
\eque
\end{lemma}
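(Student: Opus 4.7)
The plan is to reduce the claim to the $L^2$-isometry for Gaussian stochastic integrals, together with a two-step computation: a Poisson-process covariance identity followed by a Frullani integral. Introduce the inner integrand
\[
\Phi_f(r,\omega') := \int_{\R_+}\pp{\inddd{N'(rt)\odd} - \wt p_r(t)}f(t)\d t.
\]
Once we know $\Phi_f\in L^2(\R_+\times\Omega',2r^{-1}\d r\,\d\P')$ for every $f\in\calF_0$, the quantity $G(f)$ is by construction a stochastic integral against the Gaussian random measure $M_2$, hence automatically centered Gaussian; moreover the isometry coming from \eqref{eq:ch.f} gives
\[
\cov\pp{G(f),G(g)} \;=\; 4\int_0^\infty r^{-1}\E'\bb{\Phi_f(r,\cdot)\Phi_g(r,\cdot)}\d r,
\]
and Fubini turns the inner expectation into $\int_{\R_+^2}f(t)g(s)\,C_r(s,t)\d s\d t$ with $C_r(s,t):=\cov'\pp{\inddd{N'(rs)\odd},\inddd{N'(rt)\odd}}$.

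The heart of the argument is computing $C_r$. For $s\le t$, decompose $N'(rt) = N'(rs) + (N'(rt)-N'(rs))$ with the second summand independent of the first, and use that a sum of two nonnegative integers is odd iff exactly one summand is odd; together with $\P'(N'(u)\odd)=\tfrac12(1-e^{-2u})$ this yields
\[
C_r(s,t) \;=\; \tfrac14\pp{e^{-2r|s-t|} - e^{-2r(s+t)}}.
\]
Interchanging the order of integration and invoking the Frullani identity $\int_0^\infty r^{-1}(e^{-ar}-e^{-br})\d r = \log(b/a)$ with $a=2|s-t|$, $b=2(s+t)$, the constant $4$ from the isometry cancels the $\tfrac14$ and leaves precisely the claimed kernel $\log\frac{s+t}{|s-t|}$.

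It remains to justify the integrability of $\Phi_f$ under $f\in\calF_0$, which by the very computation above reduces to showing $\int_{\R_+^2}|f(s)f(t)|\log\frac{s+t}{|s-t|}\d s\d t<\infty$; I expect this to be the main technical step. Split the kernel as $\log(s+t)+\log\frac{1}{|s-t|}$: the diagonal singularity is logarithmic and hence locally integrable in $\R_+^2$, so on any bounded region the contribution is finite by $f\in L^\infty$. For the tail, the bound $\log(s+t)\le \log 2+\log s+\log t$ (for $s,t\ge 1$) combined with the defining condition $\int_1^\infty|f(s)|\log s\,\d s<\infty$ of $\calF_0$ controls the $\log(s+t)$ part, while the $-\log|s-t|$ term is nonpositive outside the unit strip $|s-t|\le 1$ and contributes at most $C\snn{f}_\infty\snn{f}_1$ inside it (and $f\in L^1$ follows from $f\in L^\infty$ with $\int_1^\infty |f|\log s\,\d s<\infty$). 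Finally, the almost-sure linearity \eqref{eq:linearity} is immediate from the linearity of $f\mapsto\Phi_f$ composed with the linearity of the stochastic integral with respect to $M_2$ in its integrand.
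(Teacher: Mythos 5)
Your proposal is correct and follows essentially the same route as the paper: the $L^2$ isometry for the Gaussian stochastic integral, the increment-decomposition/parity computation giving $\frac14\pp{e^{-2r|s-t|}-e^{-2r(s+t)}}$, the Frullani integral, the finiteness of the log-kernel double integral via the same splitting into near-diagonal and tail regions, and linearity of the stochastic integral. The only cosmetic difference is that you compute $\cov(G(f),G(g))$ bilinearly while the paper computes the variance and polarizes.
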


\begin{proof}
We claim
that
\begin{align}
\cov(G(f),G(f))& = 2\int_{\R_+\times\Omega'}\pp{\int_0^\infty\pp{\inddd{N'(rt)\odd}-\wt p_r(t)}f(t)\d t}^2\frac{2\d r}r\d\proba'
\nonumber\\
&=\int_{\R_+^2}f(s)f(t)\Gamma(s,t)\d s\d t.\label{eq:0}
\end{align}
The first equality follows from the property of stochastic integrals. 
Next we show the expression on the right-hand side of \eqref{eq:0} is finite.  We prove $\int_0^\infty \int_s^\infty |f(s)f(t)\log((s+t)/(t-s))|\d t\d s<\infty$. To see this, we observe
\[
\int_0^\infty \int_{s+1}^\infty |f(s)f(t)|\abs{\log{\frac{s+t}{t-s}}}\d t\d s\le \int_0^\infty |f(s)|\int_{s+1}^\infty |f(t)|\log(4t)\d t\d s<\infty,
\]
\[
\int_0^\infty \int_s^{s+1}\abs{f(s)f(t)\log\frac1{t-s}}\d t\d s\le \nn f_\infty \int_0^\infty |f(s)|\d s \int_0^1|\log t|\d t<\infty, 
\]
\[
\int_1^\infty \int_s^{s+1}\abs{f(s)f(t)}\log(s+t)\d t\d s\le \int_1^\infty \int_s^{s+1}|f(s)f(t)|\log (2t)\d t\d s<\infty,
\]
and
\begin{align*}
\int_0^1 \int_s^{s+1}\abs{f(s)f(t)\log(s+t)}\d t\d s &\le \nn f_\infty^2\int_0^1\int_s^{s+1}\abs{\log (s+t)}\d t\d s\\
& \le \nn f_\infty^2\int_0^1|\log s|+\log(1+s)\d s<\infty.
\end{align*}

It remains to prove the equality in \eqref{eq:0}, which follows from a careful application of Fubini's theorem. Indeed, for all $0\le s<t$,
\begin{multline}\label{eq:key1}
\esp'\pp{\pp{\inddd{N'(rs)\odd} - \wt p_s(r)}\pp{\inddd{N'(rt)\odd}-\wt p_t(r)}}\\
 = \frac14 \pp{(1-e^{-2rs})(1+e^{-2r(t-s)}) - (1-e^{-2rs})(1-e^{-2rt})} = \frac14\pp{e^{-2r(t-s)} - e^{-2r(t+s)}}.
\end{multline}
We then start by rewriting the expression after first equality in \eqref{eq:0} as
\begin{align*}
4\int_{\R_+}&\int_{\Omega'}\int_{\R_+^2}\pp{\inddd{N'(rs)\odd}-\wt p_r(s)}\pp{\inddd{N'(rt)\odd}-\wt p_r(t)}f(s)f(t)\d s\d t\d\proba'\frac{\d r}r\\
&=4\int_{\R_+}\int_{\R_+^2}\int_{\Omega'}\pp{\inddd{N'(rs)\odd}-\wt p_r(s)}\pp{\inddd{N'(rt)\odd}-\wt p_r(t)}f(s)f(t)\d\proba'\d s\d t\frac{\d r}r\\
& =\int_{\R_+}\int_{\R_+^2}\frac{e^{-2r|t-s|}-e^{-2r(t+s)}}r f(s)f(t)\d s\d t\d r.
\end{align*}
Here in the first step we applied Fubini's theorem to interchange the two inner integrals (in this step when verifying the condition in Fubini's theorem for each $r>0$, we bounded 
$\sabs{\inddd{N'(rs)\odd}-\wt p_r(s)}\sabs{\inddd{N'(rt)\odd}-\wt p_r(t)}\le 1$ and used the assumption $\nn f_1<\infty$), and in the second step we applied \eqref{eq:key1}. In the last expression above, we can interchange the two integrals again by Fubini's theorem, which, 
in combination of Frullani integral 
\equh\label{eq:log cov}
\int_0^\infty \frac{e^{-r(t-s)} - e^{-r(t+s)}}r\d r =  \log{\frac{t+s}{t-s}}, \quad 0\le s<t,
\eque
 leads to the equality in~\eqref{eq:0} (in this step we also need the fact that the right-hand side of \eqref{eq:0} is finite, which we have shown earlier).

Once we have shown that \eqref{eq:G(f)} is a well-defined stochastic integral, the linearity relation \eqref{eq:linearity} holds immediately by linearity of stochastic integrals. Then using $\cov(X,Y) = (\cov(X+Y, X+Y) - \cov(X,X) - \cov(Y,Y))/2$ we obtain the stated covariance formula.
\end{proof}

Next we introduce an approximation of $G(f)$. For each $\epsilon>0$, set
\equh\label{eq:M(f)}
G\topp\epsilon(f):=\int_{\R_+\times\Omega'}\int_{\R_+}\pp{\inddd{N'(rt)\odd} - \wt p_r(t)}\inddd{r\le \epsilon\inv}f(t)\d tM_{2}(\d r\d\omega'),
\eque
where we follow the same notations for stochastic integrals earlier. 
Note that by stochastic Fubini theorem \citep[Proposition 5.13.1]{peccati11wiener}, we have the representation
\equh\label{eq:G_epsilon approx}
G\topp\epsilon(f) = \int_0^\infty  \wt G\topp\epsilon_tf(t)\d t, \mbox{ almost surely,}
\eque
where 
\[
\wt G\topp\epsilon_t :=
\int_{\R_+\times\Omega'}\pp{\inddd{N'(rt)\odd} - \proba'(N'(rt)\odd)}\inddd{r\le\epsilon\inv}M_{2}(\d r\d\omega'), t\ge 0,
\]
is a point-wise defined stochastic process. The identity \eqref{eq:G_epsilon approx} is often referred  in the literature to as the point-wise representation of $G\topp\epsilon$ as a generalized function.

\begin{lemma}~
\begin{enumerate}[(i)]
\item
For each $\epsilon>0$, the process $\{\wt G_t\topp\epsilon\}_{t\ge0}$ is a centered Gaussian process with
\[
\cov\pp{\wt G\topp\epsilon_s,\wt G\topp\epsilon_t} = \int_0^{\epsilon\inv}\frac{e^{-2r|t-s|} - e^{-2r(t+s)}}r\d r. 
\]
\item For each $\epsilon>0$, we have
\[
\cov\pp{G\topp\epsilon(f),G\topp\epsilon(g)} = \int_0^\infty\int_0^\infty f(s)g(t)\int_0^{\epsilon\inv}\frac{e^{-2r|t-s|}-e^{-2r(t+s)}}r\d r\d s\d t, f,g\in\calF_0.
\]
In particular, $ \int_0^\infty \wt G_t\topp\epsilon f(t)\d t\weakto G(f)$ 
as $\epsilon\downarrow0$ for all $f\in\calF_0$. 
\end{enumerate}
\end{lemma}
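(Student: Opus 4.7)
The proof splits naturally into the two parts of the lemma.

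For part (i), the key observation is that $\wt G_t^{(\epsilon)}$ is defined as a stochastic integral of a (product-)measurable function with respect to a Gaussian random measure on $\R_+\times\Omega'$ with control measure $2r\inv\d r\d\proba'$, so it is automatically a centered Gaussian random variable. Two-point joint Gaussianity follows by the same reasoning applied to an arbitrary linear combination $a\wt G_s^{(\epsilon)} + b\wt G_t^{(\epsilon)}$, which is again a stochastic integral against $M_2$. The covariance formula then reduces to a direct computation: by the isometry property of Gaussian stochastic integrals,
\[
\cov\pp{\wt G_s^{(\epsilon)},\wt G_t^{(\epsilon)}} = 2\int_0^{\epsilon\inv}\int_{\Omega'}\pp{\inddd{N'(rs)\odd}-\wt p_r(s)}\pp{\inddd{N'(rt)\odd}-\wt p_r(t)}\d\proba'\frac{2\d r}r,
\]
and plugging in the identity~\eqref{eq:key1} yields exactly the stated expression.

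For part (ii), the covariance formula for $G^{(\epsilon)}(f),G^{(\epsilon)}(g)$ is obtained by an essentially verbatim repetition of the Fubini argument used in the preceding lemma, with the additional indicator $\indd{r\le\epsilon\inv}$ restricting the outer $r$-integral to $(0,\epsilon\inv]$. The $L^1$ bounds required for Fubini remain uniform in $\epsilon$ because the inner integrand $(e^{-2r|t-s|}-e^{-2r(t+s)})/r$ is nonnegative and dominated by its $\epsilon\downarrow 0$ limit.

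To establish $\int_0^\infty \wt G_t^{(\epsilon)}f(t)\d t\weakto G(f)$ as $\epsilon\downarrow 0$, I would first invoke the stochastic Fubini identity~\eqref{eq:G_epsilon approx} to rewrite the left-hand side as $G^{(\epsilon)}(f)$. Since $G^{(\epsilon)}(f)$ and $G(f)$ are both centered Gaussian, it then suffices to show $\var(G^{(\epsilon)}(f))\to\var(G(f))$. By the formula just proved and by the Frullani identity~\eqref{eq:log cov}, the inner integral $\int_0^{\epsilon\inv}(e^{-2r|t-s|}-e^{-2r(t+s)})/r\,\d r$ increases monotonically to $\log((t+s)/|t-s|) = \Gamma(s,t)$ as $\epsilon\downarrow 0$. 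Since the absolute value of the integrand is dominated by $|f(s)f(t)|\Gamma(s,t)$, whose integrability over $\R_+^2$ was already established in the proof of the preceding lemma, dominated convergence passes the limit through the double integral and yields $\var(G^{(\epsilon)}(f))\to\var(G(f))$, completing the argument.

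The steps are all routine; the only places where care is needed are the uniform-in-$\epsilon$ Fubini justification for the covariance of $G^{(\epsilon)}(f)$ and the identification of an $\epsilon$-independent integrable dominating function for the variance convergence, both of which are handled by reusing the bounds from the previous lemma.
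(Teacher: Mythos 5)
Your proof is correct and follows essentially the same route as the paper: the Gaussian stochastic-integral isometry combined with the covariance identity \eqref{eq:key1} (Fubini being harmless thanks to the truncation $r\le\epsilon^{-1}$ and the integrability near $r=0$) yields both covariance formulas, just as in the paper's argument. The only cosmetic difference is the last step: the paper reads the convergence $G^{(\epsilon)}(f)\weakto G(f)$ directly off the representations \eqref{eq:M(f)} and \eqref{eq:G(f)} ($L^2$-convergence of the integrands), whereas you deduce it from convergence of the variances via monotone and dominated convergence; both are valid.
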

\begin{proof}
For the first part, we have
\begin{align}
\cov \pp{\wt G\topp\epsilon_s,\wt G\topp\epsilon_t}  &= 4\int_{[0,\epsilon\inv]\times\Omega'}\pp{\inddd{N'(rs)\odd} - \wt p_s(r)}\pp{\inddd{N'(rt)\odd}-\wt p_t(r)}\frac{\d r}r\proba'(\d \omega')\nonumber\\
&= 4\int_0^{\epsilon\inv}\esp'\pp{\pp{\inddd{N'(rs)\odd} - \wt p_s(r)}\pp{\inddd{N'(rt)\odd}-\wt p_t(r)}}\frac{\d r}r,\label{eq:Fubini}
\end{align}
where in \eqref{eq:Fubini} we applied Fubini's theorem. In fact, by a straightforward calculation we have
\[
\esp\abs{\pp{\inddd{N'(rs)\odd} - \wt p_s(r)}\pp{\inddd{N'(rt)\odd}-\wt p_t(r)}}= \frac14\pp{1-e^{-4rs}}.
\]
(This function is not integrable with respect to $\d r/r$ at infinity, and hence we introduced the truncation $\inddd{r\le \epsilon\inv}$.)
The covariance function in the second part follows from a similar calculation. The convergence in fact can be read directly from \eqref{eq:M(f)} by letting $\epsilon\downarrow 0$ (compared with \eqref{eq:G(f)}).
\end{proof} 
\subsection{An extension to a generalized stable process}
The stochastic-integral representation \eqref{eq:G(f)} provides an immediate extension to a generalized infinitely-divisible process \citep{samorodnitsky16stochastic} by replacing the Gaussian random measure accordingly. We only describe an extension to stable ones for the sake of simplicity. For each $\alpha\in(0,2)$, we extend the definition of $G(f)$ in \eqref{eq:G(f)} to 
\[
G_\alpha (f) :=\int_{\R_+\times\Omega'}
\int_{\R_+}\pp{\inddd{N'(rt)\odd} - \wt p_r(t)}f(t)\d tM_{\alpha}(\d r,\d\omega'), 
\]
where $M_\alpha$ is a symmetric $\alpha$-stable random measure on $[0,\infty)\times\Omega'$ with control measure $2r\inv\d r\d\proba'$. 
So $G(f) = G_2(f)$ and we again have $G_\alpha(a_1f_1+a_2f_2) = a_1G_\alpha(f_1)+a_2G_\alpha(f_2)$ almost surely for $a_1,a_2\in\R, f_1,f_2$ from a suitably chosen class of functions. Recall that $M_\alpha$ is a symmetric $\alpha$-stable random measure and the corresponding stochastic integrals are random variables characterized by \eqref{eq:ch.f}. Formally, we have
\equh\label{eq:F alpha}
\esp \exp\pp{\i G_\alpha(f)} = \exp\pp{-2\int_0^{\infty}\esp'\abs{\int_0^\infty \pp{\inddd{N'(rt)\odd} - \wt p_r(t)}f(t)\d t}^\alpha\frac{\d r}r}.
\eque
Note that when $\alpha\in(0,2)$ it is not immediately clear what is the exact class of functions so that the right-hand side above is finite. Recall $\calF_0$ in \eqref{eq:F_0} and 
introduce
\equh\label{eq:f}
\calF_\delta :=\ccbb{f\in L^\infty([0,\infty)): \int_1^\infty s^\delta |f(s)|\d s<\infty}, \delta>0.
\eque
 In particular, $f\in\calF_\delta$ for any $\delta\ge 0$ implies that $\nn f_1<\infty$. 

\begin{lemma}
Assume one of the following:
\begin{enumerate}[(i)]
\item $\alpha\in(0,2], f\in\calF_\delta$ for some $\delta>0$, or
\item $\alpha\in(1,2], f\in \calF_0$.
\end{enumerate}
Then,  $G_\alpha(f)$ is a symmetric $\alpha$-stable random variable with characteristic function \eqref{eq:F alpha} (which is finite).
\end{lemma}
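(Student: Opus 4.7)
The plan is to reduce the statement to verifying the finiteness of the right-hand side of \eqref{eq:F alpha}; once this is done, $G_\alpha(f)$ is a well-defined symmetric $\alpha$-stable stochastic integral whose characteristic function is \eqref{eq:F alpha} by the general theory (namely \eqref{eq:ch.f} applied with control measure $2r\inv\,\d r\,\d\proba'$). So setting $I_r := \int_0^\infty \phi_r(t)f(t)\,\d t$ with $\phi_r(t) := \inddd{N'(rt)\odd} - \wt p_r(t)$, the task is to prove
\[
K_\alpha(f) := \int_0^\infty \E'|I_r|^\alpha\,\frac{\d r}{r} < \infty.
\]

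I would split $K_\alpha(f)$ at $r=1$. For $r\ge 1$, Jensen gives $\E'|I_r|^\alpha \le (\E'I_r^2)^{\alpha/2}$ (since $x\mapsto x^{\alpha/2}$ is concave on $[0,\infty)$ for $\alpha\le 2$), and from \eqref{eq:key1} together with the crude bound $e^{-2r|s-t|}-e^{-2r(s+t)}\le e^{-2r|s-t|}$ one obtains $\E'I_r^2 \le C\nn f_\infty \nn f_1 /r$. Since $\nn f_\infty,\nn f_1<\infty$ under either (i) or (ii), we get $\int_1^\infty r^{-\alpha/2-1}\,\d r<\infty$. This disposes of the large-$r$ part uniformly in both cases.

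For $r\le 1$ in Case (i), I would use the elementary inequalities $1-e^{-x}\le x^\delta$ (for $x\ge 0$, $\delta\in(0,1]$) and $\min(s,t)^\delta \le (st)^{\delta/2}$ to bound
\[
\E'I_r^2\le \frac14\int_0^\infty\int_0^\infty |f(s)||f(t)|(1-e^{-4r\min(s,t)})\,\d s\,\d t \le Cr^\delta\pp{\int_0^\infty s^{\delta/2}|f(s)|\,\d s}^2,
\]
the rightmost integral being finite for $f\in\calF_\delta$ (via $s^{\delta/2}\le 1+s^\delta$). Jensen then gives $\E'|I_r|^\alpha\le Cr^{\delta\alpha/2}$, which is integrable against $r\inv\,\d r$ on $(0,1]$.

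Case (ii) is the main obstacle, since under only $f\in\calF_0$ the quantity $\int\int \min(s,t)|f(s)||f(t)|\,\d s\,\d t$ may diverge and so the $L^2$–Jensen route from Case (i) is unavailable. Exploiting $\alpha>1$, I would apply Jensen with respect to the probability measure $|f(t)|\,\d t/\nn f_1$ (using convexity of $|x|^\alpha$ on $\R$ for $\alpha\ge 1$) and then bound $|\phi_r(t)|^\alpha\le |\phi_r(t)|$ (since $|\phi_r(t)|\le 1$) to obtain
\[
\E'|I_r|^\alpha\le \nn f_1^{\alpha-1}\int_0^\infty \E'|\phi_r(t)|\,|f(t)|\,\d t.
\]
A direct computation from the law of $N'(rt)$ gives $\E'|\phi_r(t)|=(1-e^{-4rt})/2$, and the substitution $u=4rt$ yields $\int_0^1 (1-e^{-4rt})\,r\inv\,\d r\le C(1+\log_+ t)$. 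Since $\int_0^\infty|f(t)|(1+\log_+ t)\,\d t<\infty$ for $f\in\calF_0$, the small-$r$ contribution is finite. The crucial point is that the probability-measure Jensen step fails for $\alpha\le 1$; the assumption $\alpha>1$ in (ii) is precisely what enables this argument.
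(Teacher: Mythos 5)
Your proposal is correct, and the overall reduction (finiteness of $\int_0^\infty \esp'|I_r|^\alpha\,r\inv\d r$ implies the stochastic integral is well defined with characteristic function \eqref{eq:F alpha}) is exactly the paper's starting point, as is the large-$r$ bound $\esp' I_r^2\le C\nn f_\infty\nn f_1/r$ followed by Jensen. For case (i) you and the paper both go through $\esp'|I_r|^\alpha\le(\esp' I_r^2)^{\alpha/2}$ with an $O(r^\delta)$ bound on the second moment for small $r$; the paper gets it by splitting the $s$-integral at $1/r$, while you use $1-e^{-x}\le x^\delta$ and $\min(s,t)^\delta\le(st)^{\delta/2}$ — equivalent in substance, though you should add the one-line remark that one may take $\delta\le 1$ without loss of generality (since $\calF_\delta\subset\calF_1$ for $\delta\ge1$), as your elementary inequality needs $\delta\in(0,1]$. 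Case (ii) is where you genuinely diverge: the paper stays on the $L^2$ route and proves the sharper pointwise decay $\Psi_f(r)\le C(|\log r|^{-2}\wedge r\inv)$ (via weighting $f(s)$ by $\log s$ on $s\ge(1/r)^\gamma$), and then the condition $\int_0^{1/2}|\log r|^{-\alpha}r\inv\d r<\infty$ is precisely what forces $\alpha>1$; you instead apply Jensen with respect to the probability measure $|f(t)|\d t/\nn f_1$ (convexity of $|x|^\alpha$, $\alpha\ge1$), bound $|\phi_r(t)|^\alpha\le|\phi_r(t)|$, compute $\esp'|\phi_r(t)|=\tfrac12(1-e^{-4rt})$, and integrate in $r$ first, using $\int_0^1(1-e^{-4rt})r\inv\d r\le C(1+\log_+ t)$ and the defining condition of $\calF_0$. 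Your route is more elementary, avoids the $|\log r|^{-2}$ estimate entirely, and in fact covers $\alpha=1$ as well, so it proves a slightly stronger statement; what it gives up is the explicit decay bound \eqref{eq:Psi_f ineq}, which in the paper is a self-contained quantitative estimate (and the reason the lemma as stated is restricted to $\alpha>1$ in case (ii)). All steps I checked (the Jensen step with the normalized measure, the identity for $\esp'|\phi_r(t)|$, the Frullani-type bound in $r$, and the Tonelli interchanges) are valid.
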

\begin{proof}
Again, by properties of stochastic integrals, it is equivalent to show that
\begin{multline}
\int_{\R_+\times\Omega'}
\abs{\int_{\R_+}\pp{\inddd{N'(rt)\odd} - \wt p_r(t)}f(t)\d t}^\alpha \frac{\d r}r\d\proba'\\
= \int_0^\infty \esp' \abs{\int_0^\infty\pp{\inddd{N'(rt)\odd} - \wt p_r(t)}f(t)\d t}^\alpha\frac{\d r}r<\infty.\label{eq:alpha finite}
\end{multline}
Without loss of generality, assuming $f\ge 0$. 
Introduce
\[
\Psi_f(r):= \int_0^\infty\int_s^\infty f(s) f(t)\pp{e^{-r(t-s)}-e^{-r(t+s)}}\d t\d s.
\]
We shall show
\equh\label{eq:Psi_f ineq}
\Psi_f(r)\le \begin{cases}
C(r^\delta\wedge r\inv),& \mbox{ if } f\in \calF_\delta, \delta>0,\\
C(|\log r|^{-2}\wedge r\inv), & \mbox { if } f\in \calF_0,
\end{cases}
\quad\mfa r>0,
\eque
where the constant $C$ depends on $f$ but not $r$.   
Then, by H\"older's inequality, we have
\begin{align*}
\esp \abs{\int_0^\infty\pp{\inddd{N'(rt)\odd} - \wt p_r(t)}f(t)\d t}^\alpha  & \le \pp{
\esp \abs{\int_0^\infty\pp{\inddd{N'(rt)\odd} - \wt p_r(t)}f(t)\d t}^2 }^{\alpha/2},
\\
&=  \pp{2\Psi_{f}(r)}^{\alpha/2},
\end{align*}
and hence \eqref{eq:alpha finite} becomes
$\int_0^\infty \Psi_f(r)^{\alpha/2}r\inv\d r<\infty$,
which now follows from \eqref{eq:Psi_f ineq}.

It remains to prove \eqref{eq:Psi_f ineq}. 
First, for all $r>0$ we have
\[
\Psi_f(r) 
\le \int_0^\infty f(s)\int_0^\infty f(s+t)e^{-rt}\d t\d s \le \nn f_\infty \frac 1r \int_0^\infty f(s)\d s  \le \nn f_\infty\nn f_1r^{-1}.
\]
Next, assuming $f\in\calF_\delta$ with $\delta>0$ first. 
By a change of variables write first
\[
\Psi_f(r)  = \int_0^\infty \int_0^\infty f(s)f(s+t)e^{-rt}(1-e^{-2rs})\d t \d s.
\]
 Next we prove for all $r>0$,  $\Psi_f(r)\le (2\nn f_1\int_0^\infty s^\delta f(s)\d s) r^\delta$. Indeed, we have
\begin{align*}
\int_0^{1/r}\int_0^\infty f(s)f(s+t)e^{-rt}(1-e^{-2rs})\d t\d s &\le 2r\int_0^{1/r}sf(s)\int_s^\infty f(t)\d t\d s\\
& \le 2r\nn f_1\int_0^{1/r}(1/r)^{1-\delta}s^\delta f(s)\d s \\
& \le 2r^\delta \nn f_1\int_0^\infty s^\delta f(s)\d s,
\end{align*}
and
\begin{align*}
\int_{1/r}^\infty\int_0^\infty f(s)f(s+t)e^{-rt}(1-e^{-2rs})\d t\d s &
\le \int_{1/r}^\infty f(s)\int_s^\infty f(t)\d t\d s\\
& \le \nn f_1 \int_{1/r}^\infty (1/r)^{-\delta} s^\delta f(s)\d s  \\
&\le r^\delta \nn f_1\int_0^\infty s^\delta f(s)\d s.
\end{align*}
We have proved \eqref{eq:Psi_f ineq} with $f\in\calF_\delta, \delta>0$.

Now assume $f\in \calF_0$. This time, fix $\gamma\in(0,1)$. We have, 
\begin{align*}
\int_0^{(1/r)^\gamma}\int_0^\infty f(s)f(s+t)e^{-rt}(1-e^{-2rs})\d t\d s &\le 2r\int_0^{(1/r)^\gamma}sf(s)\int_s^\infty f(t)\d t\d s\\
& \le 2r^{1-\gamma}\nn f_1^2,
\end{align*}
and, for $r<1/2$,
\begin{align*}
\int_{(1/r)^\gamma}^\infty f(s)\int_s^\infty f(t)\d t\d s & \le \int_{(1/r)^\gamma}^\infty \frac{f(s)}{\log s}\int_s^\infty f(t)\log t\d t\d s\\
& \le \frac1{\gamma^2\log^2 r}\int_{1/r}^\infty  f(s)\log s\d s \int_s^\infty  f(t)\log t\d t \le C\frac1{\log^2 r}.
\end{align*}
So for $r<1/2$ we have $\Psi_f(r)<C/\log ^2 r$. By possibly increasing $C$ we have proved  \eqref{eq:Psi_f ineq} when $f\in\calF_0$. 
\end{proof}

\subsection{A representation for the bi-fractional Brownian motion with \texorpdfstring{$H\in(0,1/2)$}{}} 
\label{sec:bifBm}
In the previous discussions in this section, we essentially worked with $G^{1/2,2H}$, of which the stochastic-integral representations have been known. Here, we provide a new stochastic-integral representation for $G^{H,K}$ with $H\in(0,1/2)$ (to be compared with the case $H=1/2$, which we recall in \eqref{eq:G SI} below). 
Consider
\[
\wt G^{H,K}(t) = \int_{\R_+\times\Omega'}\pp{\inddd{N'(\calS_{2H,r}'t)\odd} - \wt p_{2H,r}(t)}\wt M_{2,K}(\d r\d \omega'),
\]
where this time $\wt M_{2,K}$ is on $\R_+\times\Omega'$ with control measure $C_K r^{-K-1}\d r\d\proba'$ where $\proba'$ is a probability measure on $\Omega'$, under which $N'$ is a standard Poisson process and $\calS_{2H,r}$ is such that $\esp' e^{-\theta\calS'_{2H,r}} = e^{r\theta^{2H}}$ for all $\theta\ge 0$, $N'$ and $\calS_{2H,r}'$ are independent, and 
\[
\wt p_{2H,r}(t) := \proba'\pp{N'(\calS_{2H,r}'t)\odd} = \esp'\pp{\frac12\pp{1-e^{-2\calS'_{2H,r}t}}}= \frac12\pp{1-e^{-(2t)^{2H}r}}.
\]
\begin{lemma}
Under the notations above, for all $H\in(0,1/2]$ and $K\in(0,1)$ we have
\[
\cov\pp{\wt G^{H,K}(s),\wt G^{H,K}(t)} = \frac{2^{(2H-1)K}}4 \pp{(s^{2H}+t^{2H})^K - (t-s)^{2H K}}, 0\le s<t<\infty. 
\]
\end{lemma}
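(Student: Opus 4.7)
The plan is to mimic the proof of the covariance identity established earlier for $G(f)$ in Section~\ref{sec:d=1}, but this time with the Poisson clock $rt$ replaced by the randomized clock $\calS'_{2H,r}t$. First, I would invoke the Gaussian isometry for the stable random measure $\wt M_{2,K}$ (with $\alpha=2$), which gives
\eqnh
\cov\pp{\wt G^{H,K}(s),\wt G^{H,K}(t)}
= 2 C_K\int_0^\infty r^{-K-1}\,
\esp'\bb{(X_s(r)-\wt p_{2H,r}(s))(X_t(r)-\wt p_{2H,r}(t))}\d r,
\eqne
where I abbreviate $X_u(r):=\inddd{N'(\calS'_{2H,r}u)\odd}$. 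Then I would apply the law of total covariance, conditioning on $\calS'_{2H,r}=\sigma$: by the independence of $N'$ and $\calS'_{2H,r}$, the formula \eqref{eq:key1} gives (with $\sigma$ replacing $r$) the conditional covariance $\frac14(e^{-2\sigma(t-s)}-e^{-2\sigma(t+s)})$, while the two conditional means are $P_\sigma(u):=\frac12(1-e^{-2\sigma u})$, whose covariance with respect to the law of $\calS'_{2H,r}$ is $\frac14\cov'(e^{-2\calS'_{2H,r}s},e^{-2\calS'_{2H,r}t})$.

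Combining these two pieces, the key observation is that the $e^{-2\calS'_{2H,r}(s+t)}$ terms cancel and one is left with
\eqnh
\esp'\bb{(X_s(r)-\wt p_{2H,r}(s))(X_t(r)-\wt p_{2H,r}(t))}
=\tfrac14\pp{\esp' e^{-2\calS'_{2H,r}(t-s)} - \esp' e^{-2\calS'_{2H,r}s}\,\esp' e^{-2\calS'_{2H,r}t}}.
\eqne
Using the Laplace transform $\esp' e^{-\theta\calS'_{2H,r}}=e^{-r\theta^{2H}}$ of the stable subordinator, this simplifies to $\frac14\bpp{e^{-r2^{2H}(t-s)^{2H}}-e^{-r2^{2H}(s^{2H}+t^{2H})}}$.

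It then remains to evaluate
\eqnh
\frac{C_K}{2}\int_0^\infty r^{-K-1}\pp{e^{-r2^{2H}(t-s)^{2H}}-e^{-r2^{2H}(s^{2H}+t^{2H})}}\d r
\eqne
via the standard identity $\int_0^\infty r^{-K-1}(e^{-ar}-e^{-br})\d r=\frac{\Gamma(1-K)}{K}(b^K-a^K)$ (valid for $K\in(0,1)$ and $0<a<b$, and proved by an integration by parts). This yields the stated covariance, provided the constant $C_K$ is chosen as $C_K=K/(2^{K+1}\Gamma(1-K))$; this is how the normalization of $\wt M_{2,K}$ must be fixed. The only nontrivial step is the conditional-expectation bookkeeping in the previous paragraph, since $\wt p_{2H,r}(u)$ is the \emph{unconditional} probability rather than the conditional one given $\calS'_{2H,r}$; everything else is then a mechanical application of Fubini (justified as in the proof of \eqref{eq:0}) and the Frullani-type identity above.
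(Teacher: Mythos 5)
Your proposal is correct and follows essentially the same route as the paper's proof: the Gaussian isometry for $\wt M_{2,K}$, the computation of $\cov'\pp{\inddd{N'(\calS_{2H,r}'s)\odd},\inddd{N'(\calS_{2H,r}'t)\odd}}$ by conditioning on $\calS'_{2H,r}$ and using its Laplace transform $e^{-r\theta^{2H}}$, and the identity $\int_0^\infty r^{-K-1}(e^{-ar}-e^{-br})\d r=\Gamma(1-K)K^{-1}(b^K-a^K)$; the paper merely states the covariance of the indicators without spelling out your law-of-total-covariance bookkeeping. The only discrepancy is the harmless normalization of the constant: the paper applies the isometry without the extra factor $2$ from \eqref{eq:ch.f} and takes $C_K=K2^{-K}/\Gamma(1-K)$ (as in Remark \ref{rem:NR'}), whereas you retain that factor and correspondingly halve $C_K$.
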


\begin{proof}Assume $s<t$. We first compute (with respect to $\proba'$)
\[
\cov'\pp{\inddd{N'(\calS_{2H,r}'s)\odd},\inddd{N'(\calS_{2H,r}'t)\odd}} = \frac14 \pp{e^{-2^{2H}(t-s)^{2H} r} - e^{-2^{2H}(s^{2H}+t^{2H})r}}.
\]
Then, 
\begin{align*}
\cov\pp{\wt G^{H,K}(s),\wt G^{H,K}(t)} & =  C_K\int_0^\infty r^{-K-1}\d r \cov'\pp{\inddd{N'(\calS_{2H,r}'s)\odd},\inddd{N'(\calS_{2H,r}'t)\odd}}\\
& = \frac{C_K}4\int_0^\infty \pp{e^{-2^{2H}(t-s)^{2H}r} - e^{-2^{2H}(s^{2H}+t^{2H})r}}r^{-K-1}\d r\\
& = \frac{C_K}4\frac{\Gamma(1-K)}K 2^{2HK}\pp{\pp{s^{2H}+t^{2H}}^K - (t-s)^{2HK}}.
\end{align*}
\end{proof}
Note that when $H = 1/2$, this representation was known \citep{durieu20infinite,fu20stable}. A new idea here for $H\in(0,1/2)$ is the introduction of a stable subordinator $\calS_{2H,r}'$ in the representation, and the same idea shall be applied in Section \ref{sec:general}. 
\begin{remark}
In fact, one can have accordingly stochastic representations \citep{fu20stable} for bi-fractional Brownian motions indexed by metric spaces. Furthermore, replacing Gaussian random measures by symmetric $\alpha$-stable ones, we obtain immediately extensions of bi-fractional Brownian motions, which we may name {\em bi-fractional stable motions}. We omit the details. 
\end{remark}
\begin{remark}\label{rem:NR'}

Our result can be interpreted as a limit theorem for $\B^H$ as $H\downarrow 0$ with a different shift from the one considered by \citet{neuman18fractional} in \eqref{eq:NR'}. Our choice of the shift is in fact related to the decomposition of the fractional Brownian motion by a bi-fractional Brownian motion due to \citet{lei09decomposition}.
Namely,  $\B^H$ satisfies
\equh\label{eq:lei09}
\pp{\B^H_t}_{t\in[0,\infty)}\eqd\pp{G^{1/2,2H}_t + W^H_t}_{t\in[0,\infty)},
\eque
where $G^{1/2,2H}$ is a bi-fractional Brownian motion introduced earlier and $W^H$ satisfies
\[
 \cov\pp{W^H_s,W^H_t} = \frac12\pp{s^{2H} + t^{2H} - (s+t)^{2H}},
\]
and the two processes are independent.
Moreover, the decomposition \eqref{eq:lei09} can hold in the almost sure sense. Let $(\Omega,\calF,\proba)$ be the by-default probability space that the random measures are defined on. Set $E = \R_+\times\Omega'$ where $(\Omega',\calF',\proba')$ is another probability space on which a standard  Poisson process $N'$ is defined, and $\wt M_{2,H}$ as a Gaussian random measure (defined on $(\Omega,\calF,\proba)$) on $E = \R_+\times\Omega'$ with control measure $C_{2H}r^{-2H-1}\d r\d\proba'$ with
$C_\beta = \beta 2^{-\beta}/\Gamma(1-\beta), \beta\in(0,1)$.
Then, \eqref{eq:lei09} holds with the following:
\begin{align}\label{eq:fBm SI}
\B_t^H&:=\int_{\R_+\times\Omega'}\inddd{N'(rt)\odd}\wt M_{2,2H}(\d r\d \omega'),\\
 G^{1/2,2H}_t&:= \int_{\R_+\times\Omega'}\pp{\inddd{N'(rt)\odd} - \wt p_r(t)}\wt M_{2,2H}(\d r\d \omega'),\label{eq:G SI}\\
W^H_t&:= \int_{\R_+\times\Omega'} \wt p_r(t)\wt M_{2,2H}(\d r\d \omega'),\nonumber
\end{align}
with $\wt p_r(t) = \proba'(N'(rt)\odd) = \pp{1-e^{-2rt}}/2$  \citep{durieu20infinite,fu20stable}.

Now, one can write 
\[
G^{1/2,2H}_t = \B^H_t-W_t^H.
\]
That is, $G^{1/2,2H}$ is $\B^H$ with a random shift $W^H$. The convergence of $H^{-1/2}G^{1/2,2H}\weakto G^{1/2}$ is now a special case of \eqref{eq:limit K}. 
\end{remark}
\section{A central limit theorem}\label{sec:CLT}

Another feature of the introduced process $G \equiv G^{1/2}$ is that it arises from a simple aggregated model. The aggregated model here is a variation of the one proposed by \citet{enriquez04simple} when investigating approximations of a fractional Brownian motion. Connections between aggregated models and fractional Gaussian processes and random fields with long-range dependence \citep{pipiras16long,samorodnitsky16stochastic} are well-known \citep{bierme10selfsimilar,breton11functional,kaj08convergence,mikosch07scaling} (in particular, \citep{bierme10selfsimilar,breton11functional} established limit theorems for generalized processes).

We follow the notations in \citet{shen23aggregated}, where the Enriquez model was revisited and extended. The model is an aggregated model of an increasing number, denoted by $m_n$, of i.i.d.~layers indexed by $i\in\N$, and each layer consists of a sequence of stationary random variables denoted by $(\calX_{i,j})_{j\in\N}$ that we now introduce. First, let $\calX$ be a standard Gaussian random variable, $q$ a uniform random variable taking values from $(0,1)$, and $\{\eta_{j}\topp{q}\}_{j\in\N}$  conditionally i.i.d.~Bernoulli random variables with parameter $q$. Assume that $\calX$ is independent from $q$ and $\{\eta_j\topp q\}_{j\in\N}$. 
Let $(\calX_i,q_i, \{\eta_{i,j}\topp{q_i}\}_{j\in\N})_{i\in\N}$ denote i.i.d.~copies of $(\calX,q,\{\eta_j\topp q\}_{j\in\N})$, and set
\[
\tau_{i,j}\topp{q_i} :=\summ k1j \eta_{i,k}\topp {q_i}, j\in\N.
\]
Then,  the $j$-th element of $i$-th layer is defined as  
\[
\calX_{i,j}:=\frac{\calX_i}{q^{1/2}}\pp{(-1)^{\tau_{i,j}\topp{q_i}+1}\eta_{i,j}\topp{q_i} - \esp \pp{(-1)^{\tau_{i,j}\topp{q_i}+1}\eta_{i,j}\topp{q_i}\mmid q_i}}.
\]
Now, the partial-sum process of the aggregated model is represented as
\[
\what S_{n,j} := \frac1{\sqrt{m_n}}\summ i1{m_n}\summ k1j \calX_{i,k}= \frac1{\sqrt{m_n}}\summ i1{m_n}\frac{\calX_i}{q_i^{1/2}}\pp{\inddd{\tau_{i,j}\topp{q_i}\odd} - \proba\pp{\tau_{i,j}\topp {q_i}\odd\mmid q_i}}, j\in\N. 
\]
In order to see that the proposed model exhibits log-correlated dependence structure, a quick calculation yields
\equh\label{eq:cov conv}
\limn \cov\pp{\what S_n(s),\what S_n(t)} = \frac14\log{\frac{s+t}{s-t}}, 0<s<t.
\eque
Indeed,  \begin{align*}
\cov\pp{\what S_n(s),\what S_n(t)} 
& = \int_0^{1}\frac 1r\Bigg[\pp{\frac 12\pp{1-\pp{1-2r}^{\floor{ns}}}\frac12\pp{1+(1-2r)^{\floor{n(t-s)}}}}\\
& \quad\quad - \frac14\pp{1-(1-2r)^{\floor{ns}}}\frac12\pp{1-(1-2r)^{\floor{nt}}} \Bigg]\d r\\
& \to \frac14 \int_0^\infty \frac1r \pp{e^{-2(t-s)r} - e^{-2(t+s)r}}\d r = \frac14\log{\frac{t+s}{t-s}}.
\end{align*}
(Note that for binomial random variable $\tau_n\topp p$ with parameter $(n,p)$, $\proba(\tau_n\topp p\odd) = (1/2)(1-(1-2p)^n)$ and $\proba(\tau_n\topp p{\rm~is~even}) = (1/2)(1+(1-2p)^n)$.)
 The step of convergence `$\to$' would require a little extra work. This is a standard analysis and details are omitted here;  we do not make use of \eqref{eq:cov conv} in the sequel. 

The convergence in \eqref{eq:cov conv} suggests that there is no convergence of point-wise defined stochastic processes, and instead one should search for convergence in the sense of generalized processes. For this purpose, recall $\calF_\delta$ in \eqref{eq:f} and introduce
\equh\label{eq:G_n(f)}
G_n(f):=2 \sif j1\what S_{n,j}\int_{(j-1)/n}^{j/n}f(x)\d x, \quad f\in \calF_\delta,
\eque
with $\delta>0$.
(Note that we do not assume $f$ to be continuous for $f\in\calF_\delta$. If continuity is assumed, then one can replace $\int_{(j-1)/n}^{j/n}f(x)\d x$ by $f(j/n)/n$ and establish the same limit theorem below.)

The second main result is  the following limit theorem. 
\begin{theorem}
\label{thm:1}
Assume $\limn m_n/n = \infty$ and $\delta>0$. Then, for all $f\in \calF_\delta$, 
\[
\limn\esp \exp\pp{\i G_n(f)} = \esp \exp\pp{\i G(f)} = \exp\pp{-\frac12\cov(G(f),G(f))}.
\]
In particular, $\{G_n(f)\}_{f\in\calF_\delta}\fddto \{G(f)\}_{f\in\calF_\delta}$ as $n\to\infty$.
\end{theorem}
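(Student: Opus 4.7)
My plan is to exploit the conditional Gaussian structure of $G_n(f)$ coupled with a conditional weak law of large numbers. Interchanging summations in \eqref{eq:G_n(f)} gives
\[
G_n(f) = \frac{1}{\sqrt{m_n}}\summ i1{m_n}\calX_i\, Y_{n,i},\qquad Y_{n,i} := \frac{2}{q_i^{1/2}}\sif j1\pp{\inddd{\tau_{i,j}\topp{q_i}\odd} - p_{i,j}}\int_{(j-1)/n}^{j/n} f(x)\, \d x,
\]
where $p_{i,j}:=\proba(\tau_{i,j}\topp{q_i}\odd\mmid q_i)$. For each fixed $n$, the $(Y_{n,i})_{i=1}^{m_n}$ are i.i.d.\ copies of some $Y_n$, independent of the i.i.d.\ standard Gaussians $(\calX_i)$. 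Conditioning on $\calG_n := \sigma(Y_{n,1},\ldots,Y_{n,m_n})$ and using Gaussianity of the $\calX_i$ yields
\[
\esp\bb{e^{\i G_n(f)}\mmid \calG_n} = \exp\pp{-\tfrac12 V_n},\qquad V_n:=\frac{1}{m_n}\summ i1{m_n}Y_{n,i}^2.
\]
Since $e^{-V_n/2}\in[0,1]$, by bounded convergence the theorem reduces to showing $V_n\to \sigma^2:=\cov(G(f),G(f))$ in probability, which then gives $\esp e^{\i G_n(f)} = \esp e^{-V_n/2}\to e^{-\sigma^2/2}=\esp e^{\i G(f)}$.

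I would establish $V_n\to \sigma^2$ in $L^2$ by proving convergence of means and vanishing of variances. For the means, expanding $\esp Y_n^2$ together with the binomial covariance identity
\[
\cov\pp{\inddd{\tau_j\topp q\odd},\inddd{\tau_k\topp q\odd}\mmid q} = \tfrac{1}{4}\bb{(1-2q)^{|k-j|}-(1-2q)^{j+k}}
\]
yields
\[
\esp Y_n^2 = \sif j1\sif k1 b_j^{(n)} b_k^{(n)}\int_0^1 \frac{(1-2q)^{|k-j|}-(1-2q)^{k+j}}{q}\, \d q,\qquad b_j^{(n)}:=\int_{(j-1)/n}^{j/n}f,
\]
and a Riemann-sum argument combined with the Frullani identity \eqref{eq:log cov} identifies the limit as $\int_0^\infty\int_0^\infty f(s)f(t)\log\frac{s+t}{|s-t|}\,\d s\,\d t=\sigma^2$. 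The analytic estimates are of the same flavor as the bounds on $\Psi_f$ already proved in Section \ref{sec:d=1}: the condition $f\in\calF_\delta$ controls the tail at infinity while the near-diagonal logarithmic singularity is locally integrable.

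For the variance step I would estimate $\var(V_n)=\var(Y_n^2)/m_n$ by controlling $\esp Y_n^4$. Expanding the fourth moment produces a quadruple sum of centered four-point Bernoulli moments integrated against $q^{-2}\,\d q$; the three Wick-type pairings of indices $(j_1,j_2,j_3,j_4)$ contribute at most $3(\esp Y_n^2)^2=O(1)$, while the non-pairing contributions, which must absorb the $q^{-2}$ singularity at the origin, are of size at most $O(n)$. Hence $\var V_n = O(n/m_n)\to 0$ by the hypothesis $m_n/n\to\infty$. I expect this to be the main obstacle: careful bookkeeping of the four-point centered Bernoulli joint moments, and verifying that the $q^{-2}$ weight at $q\downarrow 0$ is tamed by cancellations in the centered indicators, in parallel with how $q^{-1}$ was tamed through the Frullani identity in the second-moment computation. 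Tail control of the quadruple sum in the $j_\ell$ variables is again supplied by the integrability condition $f\in\calF_\delta$.

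Finally, the asserted f.d.d.\ convergence follows from linearity: since $G_n(a_1 f_1 + a_2 f_2) = a_1 G_n(f_1) + a_2 G_n(f_2)$ almost surely and the analogous linearity \eqref{eq:linearity} holds for $G$ on the vector space $\calF_\delta$, the single-function characteristic-function convergence upgrades to joint convergence via the Cramer--Wold device.
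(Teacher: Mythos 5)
Your conditional-Gaussian reduction is sound as far as it goes, and it is in fact equivalent to the paper's starting point: since the layers are i.i.d., $\esp e^{-V_n/2}=\pp{\esp e^{-Y_n^2/(2m_n)}}^{m_n}=\pp{\int_0^1\esp\exp\pp{-\tfrac1{2m_nq}\psi_{n,q}(f)^2}\d q}^{m_n}$, which is exactly the identity the paper analyzes (the paper then linearizes the exponential on $q\in[\epsilon/n,\epsilon^{-1}/n]$, using $m_n\gg n$ and the bounds $\esp\psi_{n,q}(f)^2\le C\min((nq)^\delta,(nq)^{-1})$ to kill the rest, thereby never needing any law of large numbers for $V_n$). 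Your identification of the limiting second moment via the binomial covariance identity and Frullani is also in the spirit of the paper's Lemma on $\limn\esp\psi_{n,r/n}(f)^2$, and the final Cramér--Wold step is fine.

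The genuine gap is the variance step: the claim $\esp Y_n^4=O(n)$, hence $\var(V_n)=O(n/m_n)$, is false --- in fact $\esp Y_n^4=\infty$ for every fixed $n$ unless all $f_{n,j}=\int_{(j-1)/n}^{j/n}f$ vanish. The $q^{-2}$ weight coming from $(q^{-1/2})^4$ is \emph{not} tamed by cancellations in the centered indicators: let $j_0$ be an index with $T_{j_0}:=\sum_{j\ge j_0}f_{n,j}\neq 0$, and consider the event that the first Bernoulli success occurs exactly at $j_0$ and no further success occurs up to a large fixed $J$. For $q$ small this event has probability at least $q/2$, and on it $|\psi_{n,q}(f)|\ge c>0$ (the tail sum beyond $J$ and the centering term $\sum_j |f_{n,j}|p_j(q)$ can be made smaller than $|T_{j_0}|/2$ uniformly for small $q$), so $Y_n^4\ge c^4 q^{-2}$ there and $\esp Y_n^4\ge \int_0^{q_0}c^4 q^{-2}\cdot\tfrac q2\,\d q=\infty$. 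The point is that the centering only buys a factor of order $q$ (the probability of seeing a success), which is enough to make $q^{-1}\esp\psi_{n,q}^2$ integrable near $0$ (second moment) but not $q^{-2}\esp\psi_{n,q}^4$ (fourth moment); this divergence at $q\downarrow0$ is precisely the ``$H=0$'' feature of the model. Consequently $\var(V_n)$ is infinite and Chebyshev cannot be applied as proposed. The conclusion $V_n\to\sigma^2$ in probability can still be rescued, but only with a truncated weak law using the correct tail behaviour (roughly $\proba(Y_n^2>t)\lesssim \min(C/t,\,Cn/t^2)$, truncating at level $\sqrt{nm_n}$, which turns the required smallness into $n\log(m_n/n)/m_n\to 0$); alternatively, and more simply, one can bypass the LLN altogether by working directly with the exact product formula above, as the paper does.
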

\begin{remark}
We impose the  condition in $\calF_\delta$ as it is convenient for establishing our limit theorems.  In particular, $\calF_\delta$ includes all smooth functions that decay rapidly at infinity. 
L\'evy's continuity theorem yields immediately the convergence of $G_n$ in the space of tempered distributions. For the generalized process $G$ indexed by $\MM$, L\'evy's continuity theorem applies as long as the space of functions on $\MM$ is a nuclear space \citep{fernique68generalisations,meyer65theoreme}.  We do not pursue to strengthen the convergence in the space of distributions here. 
\end{remark}
\begin{remark}
The process $\what S_{n,j}$ can be thought of the $H=0$ counterpart of the model considered in \citet{enriquez04simple} and \citet{shen23aggregated}. Therein, the density of $q$ is asymptotically equivalent to $Cr^{-2H}$ as $q\downarrow 0$ for some constant $C>0$, and the normalization $1/\sqrt{m_n}$ in $\what S_{n,j}$ is replaced by $1/(n^H\sqrt{m_n})$, and it was shown that $\{\what S_{n,\floor{nt}}\}_{t\in[0,1]}\fddto c\{G^H_t\}_{t\in[0,1]}$ for some explicit constant $c$. (Therein, there is also no centering in $\what S_{n,j}$ and so the limit is actually $\B^H$ instead of $G^H$.)

\end{remark}
\subsection{Proof of  Theorem \ref{thm:1}} Since all $G_n(f)$ and $G(f)$ are linear in $f$, it suffices to show
\equh\label{eq:chf conv}
\limn\esp \exp\pp{\i G_n(f)} = \esp \exp\pp{\i G(f)} = \exp\pp{-\frac12 \cov( G(f),G(f))},
\eque
for all $f\in\calF_\delta,\delta>0$.
We start by evaluating $\esp e^{\i G_n(f)}$. We first introduce some notations. Let $(\eta_j\topp r)_{j\in\N}$ denote i.i.d.~Bernoulli random variables with parameter $r\in(0,1)$, and set $\tau_j\topp r := \summ k1j \eta_k\topp r$. 
Introduce  
\[
p_j(r) := \proba\pp{\tau_j\topp r\odd}, r\in(0,1). 
\]
Note that here $r$ is a fixed real number. We also let $q$ denote a uniform random variable, and accordingly we write $p_j(q) = \proba(\tau_j\topp q\odd\mid q)$. 
Write
\[
 \psi_{n,r}(f):=2\sif j1f_{n,j}\pp{\inddd{\tau_j\topp r\odd} - p_j(r)} \qmwith f_{n,j} := \int_{(j-1)/n}^{j/n}f(x)\d x.
\]
Let $\calX$ denote a standard Gaussian random variable. Since $G_n(f)$ is the sum of $m_n$ i.i.d.~random variables (recall \eqref{eq:G_n(f)}), we have
\begin{align*}
\esp e^{\i G_n(f)} & = \pp{\esp \exp\pp{\frac {2\i}{ \sqrt {m_n} }\sif j1f_{n,j}\frac{\calX}{q^{1/2}}\pp{\inddd{\tau_j\topp q\odd} -p_j(q)}}}^{m_n}\\
& =\pp{\int_0^1 \esp \exp\pp{-\frac1{2m_nr}\psi_{n,r}(f)^2}\d r}^{m_n}.
\end{align*}

The goal is to write the above as $(1+v_n(f)/m_n)^{m_n}$ with $v_n(f)\to -\cov(G(f),G(f))/2$ as $n\to\infty$. For this purpose, we write
\[
\int_0^1 \esp \exp\pp{-\frac1{2m_nr}\psi_{n,r}(f)^2}\d r = 1 + \int_0^1 \pp{\esp \exp\pp{-\frac1{2m_nr}\psi_{n,r}(f)^2}-1}\d r.
\]
Fix $\epsilon>0$. In order to prove \eqref{eq:chf conv} we shall prove
\begin{align}
\lim_{\epsilon\downarrow0}\limn  m_n\int_{\epsilon/n}^{\epsilon\inv/n}  \pp{\esp \exp\pp{-\frac1{2m_nr}\psi_{n,r}(f)^2}-1}\d r &= -\frac12\cov(G(f),G(f)),\label{eq:limit eps1}\\
\lim_{\epsilon\downarrow0}\limsupn 
m_n\int_{[0,1]\setminus[\epsilon/n,\epsilon\inv/n]}  \pp{\esp \exp\pp{-\frac1{2m_nr}\psi_{n,r}(f)^2}-1}\d r &= 0.\label{eq:limit eps2}
\end{align} 
\medskip

\noindent (i) We start by proving \eqref{eq:limit eps1}.
First,
\begin{multline}\label{eq:limit eps1'}
\int_{\epsilon/n}^{\epsilon\inv/n}  \pp{\esp \exp\pp{-\frac1{2m_nr}\psi_{n,r}(f)^2}-1}\d r\\
 = \frac 1n\int_{\epsilon}^{\epsilon\inv} \pp{\esp \exp\pp{-\frac n{2m_nr}\psi_{n,r/n}(f)^2}-1}\d r \sim - \frac 1{2m_n}\int_\epsilon^{\epsilon\inv}\frac1r\esp \psi_{n,r/n}(f)^2{\d r}.
\end{multline}
In the last step we used the assumption that $m_n\gg n$ and that 
\[
\limsupn\sup_{r\in[\epsilon,\epsilon\inv]}\frac{\psi_{n,r/n}(f)^2}{r}<\infty.
\] 
This is because for $f\in\calF_\delta$ we have for all $r>0$,
\[
|\psi_{n,r}(f)| \le 2\sif j1f_{n,j}\le 2\nn f_1<\infty.
\]

For each $r>0$ fixed, we shall establish the following.
\begin{lemma}\label{lem:var conv}
For all $r>0,\delta>0$ and $f\in\calF_\delta$, 
\begin{align*}
\limn \esp\psi_{n,r/n}(f)^2 &= 4\esp\pp{\int_0^\infty f(s)\pp{\inddd{N(rs)\odd} - \wt p_r(s)}\d s}^2\\
& = \int_0^\infty\int_0^\infty f(s)f(t)\log\frac{s+t}{|t-s|}\d s\d t.\nonumber
\end{align*}
\end{lemma}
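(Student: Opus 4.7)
The plan is to convert the left-hand side into a discrete double sum with an explicit kernel, identify it as a Riemann approximation of a continuous integral involving exponentials, pass to the limit by dominated convergence, and then recognize the limit via Fubini and Frullani.

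First I would make the covariance of the centered odd-indicators explicit. Using $\proba(\tau_j^{(r)}\odd) = (1-(1-2r)^j)/2$ together with the Markov property of the Bernoulli random walk (so $\tau_k - \tau_j$ is independent of $\tau_j$ and equidistributed with $\tau_{k-j}^{(r)}$), a short calculation gives, for $j\le k$,
\[
\esp\bigl[(\inddd{\tau_j^{(r)}\odd} - p_j(r))(\inddd{\tau_k^{(r)}\odd} - p_k(r))\bigr] = \frac14\bigl((1-2r)^{k-j} - (1-2r)^{k+j}\bigr).
\]
Substituting $r \mapsto r/n$ and using the definition of $\psi_{n,r/n}(f)$ then yields
\[
\esp\psi_{n,r/n}(f)^2 = \sum_{j,k\ge 1} f_{n,j}f_{n,k}\bigl((1-2r/n)^{|k-j|}-(1-2r/n)^{j+k}\bigr).
\]

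Next, I would rewrite the sum as an integral. Since $f_{n,j} = \int_{(j-1)/n}^{j/n} f(x)\d x$, the double sum equals $\int_0^\infty\int_0^\infty f(s)f(t) h_n(s,t)\d s\d t$, where $h_n(s,t) = (1-2r/n)^{|\lceil nt\rceil-\lceil ns\rceil|} - (1-2r/n)^{\lceil ns\rceil+\lceil nt\rceil}$. Pointwise, $h_n(s,t)\to e^{-2r|t-s|}-e^{-2r(s+t)}$. The bound $(1-2r/n)^m \le e^{-2rm/n}$ gives, for $n$ large, $|h_n(s,t)| \le 2e^{-r|t-s|}$, and since $f\in\calF_\delta\subset L^\infty\cap L^1$, Fubini produces the integrable majorant $2\nn f_\infty|f(s)|\cdot e^{-r|t-s|}$. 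Dominated convergence therefore yields
\[
\limn \esp\psi_{n,r/n}(f)^2 = \int_0^\infty\int_0^\infty f(s)f(t)\bigl(e^{-2r|t-s|}-e^{-2r(s+t)}\bigr)\d s\d t.
\]
Applying Fubini to $4\esp'(\int f(s)(\inddd{N'(rs)\odd}-\wt p_r(s))\d s)^2$ and inserting identity \eqref{eq:key1} shows that this is exactly the first equality claimed. The final equality involving $\log\frac{s+t}{|t-s|}$ then follows from Frullani's identity $\int_0^\infty r^{-1}(e^{-2r|t-s|} - e^{-2r(s+t)})\d r = \log\frac{s+t}{|t-s|}$, which is the form in which this limit is ultimately used when integrating against $\d r/r$ in \eqref{eq:limit eps1}.

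The main technical obstacle is justifying the dominated-convergence step uniformly in $(s,t)$: near the diagonal $s\approx t$ the kernel $h_n$ is close to $1$ and the exponential majorant must absorb it, while on the tails $s+t\to\infty$ one needs the decay encoded in $f\in\calF_\delta$ to get integrability. The absence of any continuity hypothesis on $f$ is harmless, since we are working with the local average $nf_{n,j}$ rather than the pointwise value $f(j/n)$; the Riemann-type convergence follows from $L^1$-density of step functions instead.
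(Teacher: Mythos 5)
Your argument is correct, but it takes a genuinely different route from the paper. The paper fixes $r$, truncates the sum to $j\le Kn$, proves the distributional convergence \eqref{eq:CMT} via the Poisson limit \eqref{eq:PP convergence} for Bernoulli triangular arrays together with a Skorokhod/continuous-mapping argument, upgrades this to convergence of second moments by boundedness, and finally removes the truncation by a tail estimate using $\nn f_1<\infty$; the limit is thereby identified directly as the Poisson functional $4\esp\pp{\int_0^K f(t)(\inddd{N(rt)\odd}-\wt p_r(t))\d t}^2$. You instead compute the covariance kernel of the centered odd-indicators in closed form (the same computation the paper performs later, cf.\ \eqref{eq:var formula}), rewrite $\esp\psi_{n,r/n}(f)^2$ as $\int\int f(s)f(t)h_n(s,t)\d s\d t$ with $h_n(s,t)=(1-2r/n)^{|\sceil{nt}-\sceil{ns}|}-(1-2r/n)^{\sceil{ns}+\sceil{nt}}$, and pass to the limit by dominated convergence using $0\le h_n(s,t)\le e^{2r/n}e^{-2r|t-s|}$ and $f\in L^1\cap L^\infty$, identifying the limit with the Poissonized second moment only at the end via \eqref{eq:key1} and Fubini. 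Your route is more elementary and avoids point-process machinery and the $K$-truncation altogether (no tail step is needed since the exponential majorant is integrable on all of $\R_+^2$); the paper's route is more structural, exhibiting the limiting random variable itself rather than only its variance, which matches the stochastic-integral representation used elsewhere. One point worth making explicit: for fixed $r$ the limit equals $\int\int f(s)f(t)\pp{e^{-2r|t-s|}-e^{-2r(s+t)}}\d s\d t$, which depends on $r$; the displayed second equality with the logarithmic kernel is obtained only after integrating in $r$ against $\d r/r$ via Frullani \eqref{eq:log cov}, exactly as in \eqref{eq:limit eps1} — your reading of that line is the correct one, and the paper's own proof likewise establishes only the fixed-$r$ identity.
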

\begin{proof}
Indeed, 
\[
\esp\psi_{n,r/n}(f)^2 = \esp\pp{2\sif j1f_{n,j}\pp{\inddd{\tau_j\topp {r/n}\odd} - p_j(r/n)}}^2.
\]
Fix a parameter $K\in\N$, and  write
\[
\psi_{n,r,K}(f) := 2\sum_{j=1}^{Kn}
f_{n,j}\pp{\inddd{\tau_j\topp r\odd} - p_j(r)},
\]
and $\wt\psi_{n,r,K}(f):=\psi_{n,r}(f)-\psi_{n,r,K}(f)$. We first show that
\equh\label{eq:variance conv}
\limn \esp\psi_{n,r/n,K}(f)^2 = 4\esp\pp{\int_0^Kf(t)\pp{\inddd{N_r(t)\odd} - \wt p_r(t)}\d t}^2.
\eque
It suffices to show
\equh\label{eq:CMT}
\summ j1{Kn}f_{n,j}\inddd{\tau_j\topp{r/n}\odd}\weakto \int_0^Kf(x)\inddd{N_r(x)\odd}\d x.
\eque
Then, since the random variables on the left-hand side are all bounded by $K\nn f_\infty$, we have the convergence of all moments, and in particular \eqref{eq:variance conv} holds.

The convergence \eqref{eq:CMT} essentially follows from the well-known Poisson convergence of triangular arrays of i.i.d.~Bernoulli random variables 
\equh\label{eq:PP convergence}
\summ j1{Kn} \eta_j\topp{r/n}\delta_{j/n}\weakto \PPP([0,K],r\d x),
\eque
where the right-hand side denotes the Poisson point process on $[0,K]$ with intensity measure $r\d x$, and a continuous-mapping argument. Recall that $r>0$ is fixed in \eqref{eq:CMT}. 

First, we re-state the point-process convergence in \eqref{eq:PP convergence} as the convergence of a sequence of random points.  Set $\what j_{n,1}:=\min\{j\in\N:\eta_j\topp{r/n} = 1\}$, 
\[
\what j_{n,k+1} :=\min\ccbb{j>\what j_{n,k}: \eta_j\topp{r/n} = 1}, k\in\N,
\]
and set 
\[
N_{K,n}:=\max\ccbb{k\in\N:\what j_{n,k}\le Kn}.
\]
So the point process on the left-hand side of \eqref{eq:PP convergence} is now $\summ k1{N_{K,n}}\delta_{\what j_{n,k}/n}$. Set also $\{\Gamma_j\}_{j\in\N}$ be the sequence of consecutive arrival times of the Poisson process $\{N(rt)\}_{t\ge 0}$, and set $N_K:=\max\{j\in\N_0:\Gamma _j\le K\}$. We then have $N_{K,n}\weakto N_K$ and
\equh\label{eq:seq conv}
\pp{\what j_{n,1},\dots,\what j_{n,N_{K,n}},0,\dots} \weakto \pp{\Gamma_1,\dots,\Gamma_{N_K},0,\dots}
\eque
as weak convergence of random elements in $\{(a_j)_{j\in\N}: a_j = 0 \mbox{ for $j$ large enough}\}$. 

The left-hand side of \eqref{eq:CMT} can be re-written as
\[
F_{K,n}:=\sum_{k=1}^{\sfloor{N_{K,n}/2}}\int^{\what j_{n,2k}/n}_{\what j_{n,2k-1}/n}f(t)\d t + \inddd{N_{K,n}\odd}\int^K_{\what j_{n,N_{K,n}}/n}f(t)\d t.
\]
To show \eqref{eq:CMT} now it suffices to show
\equh\label{eq:Skorokhod}
F_{K,n}\weakto F_K:= \sum_{k=1}^{N_K/2}\int^{\Gamma_{2k}}_{\Gamma_{2k-1}}f(t)\d t + \inddd{N_K\odd}\int_{\Gamma_{N_K}}^Kf(t)\d t.
\eque
The above follows from a continuous mapping theorem applied to \eqref{eq:seq conv}. One way to make this precise is to, by Skorokhod representation theorem, assume that the convergence in \eqref{eq:seq conv} is further in the almost sure sense, and then examine \eqref{eq:Skorokhod} for every $\omega$ such that the sequence convergences. Fix such an $\omega$, and notice that for $n$ large enough $N_{K,n}(\omega) = N_K(\omega)$, and then both $F_{K,n}$ and $F_K$ are expressed the same continuous function of $N_K(\omega)$  random variables. It is now clear that \eqref{eq:Skorokhod} converges almost surely.
We have thus proved \eqref{eq:CMT}, and hence \eqref{eq:variance conv}. 

To conclude the proof, it remains to show that
\[
\lim_{K\to\infty}\limsupn \esp\wt\psi_{n,r/n,K}(f)^2 = 0.
\]
For this purpose, we could simply bound 
\[
\esp\wt\psi_{n,r/n,K}(f)^2\le \pp{2\sum_{j=Kn+1}^\infty f_{n,j}}^2 \le 4\pp{\int_K^\infty |f(x)|\d x}^2,
\]
and the desired result follows from the fact that $\nn f_1<\infty$.
\end{proof}
Following Lemma \ref{lem:var conv}, we have
\[
\limn\int_{\epsilon/n}^{\epsilon\inv/n}\frac{\esp \psi_{n,r/n}(f)^2} r\d r = \int_0^\infty\int_0^\infty f(s)f(t)\int_\epsilon^{\epsilon\inv}\frac{e^{-2r(t-s)}-e^{-2r(t+s)}}r\d r\d s\d t,
\]
which as $\epsilon\downarrow 0$ converges to $\cov(G(f),G(f))$ by \eqref{eq:log cov}. Combining with \eqref{eq:limit eps1'}, we have thus proved \eqref{eq:limit eps1}. \medskip

\noindent (ii) Now we prove \eqref{eq:limit eps2}. applying the simple inequality
\[
1-\esp \exp\pp{-\frac1{2m_nr}\psi_{n,r}(f)^2}\le
 \frac 1{2m_nr}\esp\psi_{n,r}(f)^2,
\]
it suffices to show
\begin{align}\label{eq:r small}
\lim_{\epsilon\downarrow 0} \limsupn & \int_0^{\epsilon}\frac1{r}\esp \psi_{n,r/n}(f)^2\d r  = 0,\\
\lim_{\epsilon\downarrow 0}\limsupn &\int_{\epsilon\inv}^n\frac1{r}\esp \psi_{n,r/n}(f)^2\d r = 0.\label{eq:r large}
\end{align}
We first show \eqref{eq:r large}.
Note that, for $j\le j'$,
\begin{align*}
\proba & \pp{\tau_j\topp r\odd,\tau_{j'}\topp r\odd} - \proba\pp{\tau_j\topp r\odd}\proba\pp{\tau_{j'}\topp r\odd}\\
& = \frac14\pp{\pp{1-(1-2r)^j}\pp{1+(1-2r)^{j'-j}} - \pp{1-(1-2r)^j}\pp{1-(1-2r)^{j'}}}\\
& = \frac14\pp{1-(1-2r)^j}\pp{(1-2r)^{j'-j}+(1-2r)^{j'}} = \frac14\pp{1-(1-2r)^{2j}}(1-2r)^{j'-j}.
\end{align*}
Therefore, we have
\begin{align}
\esp \psi_{n,r/n}(f)^2 & \le  8\sif j1 \sum_{j'=j}^\infty \abs{f_{n,j}f_{n,j'}}
 \frac14\pp{1-\pp{1-\frac{2r}n}^{2j}}\pp{1-\frac{2r}n}^{j'-j}\nonumber\\
 & = 8\sif j1 \sum_{k=0}^\infty \abs{f_{n,j}f_{n,j+k}}
 \frac14\pp{1-\pp{1-\frac{2r}n}^{2j}}\pp{1-\frac{2r}n}^k\label{eq:var formula}\\
 & \le 2\sif j1 |f_{n,j}|\sif k0\frac{\nn f_\infty}n\pp{1-\frac{2r}n}^k = \frac{\nn f_\infty \nn f_1}{r}.\nonumber
\end{align}
Thus,
\[
\int_{\epsilon\inv}^n \frac{\esp \psi_{n,r/n}(f)^2}r\d r  \le C\int_{\epsilon\inv}^\infty \frac 1{r^2}\d r  = C\epsilon. 
\]
The desired \eqref{eq:r large} now follows. 

We next show \eqref{eq:r small}. 
Break the expression \eqref{eq:var formula} into two parts depending on $j/n\le 1/r$ and $j/n>1/r$. First,
\begin{align*}
 &   8\sum_{j=1}^{\floor{n/r}} \abs{f_{n,j}}\sif k0 \abs{f_{n,j+k}} \frac14\pp{1-\pp{1-\frac{2r}n}^{2j}}\pp{1-\frac{2r}n}^k \\
&\le 8r\sum_{j=1}^{\floor{n/r}} \frac jn\abs{f_{n,j}}\nn f_1 \le  
 8r^\delta\sum_{j=1}^{\floor{n/r}} \pp{\frac jn}^\delta\abs{f_{n,j}}\nn f_1
 \le C  r^\delta  \int_0^\infty s^\delta |f(s)|\d s\nn f_1,
\end{align*}
where in the first step we used  $1-x^p = (1-x)(1+x+\cdots+x^{p-1})\le p(1-x)$ for $x\in[-1,1], p\in\N$. Also,
\begin{align*}
 &  8\sum_{j=\floor{n/r}+1}^\infty \abs{f_{n,j}}\sif k0 \abs{f_{n,j+k}} \frac14\pp{1-\pp{1-\frac{2r}n}^{2j}}\pp{1-\frac{2r}n}^k \\
&\le 8\sum_{j=\floor{n/r}+1}^\infty \abs{f_{n,j}}\nn f_1 \le 8r^\delta
\sum_{j=\floor{n/r}+1}^\infty \pp{\frac jn}^\delta \abs{f_{n,j}} \nn f_1\le C r^\delta \int_0^\infty s^\delta |f(s)|\d s\nn f_1.
\end{align*}
That is, we have shown that  $\esp \psi_{n,r/n}(f)^2\le Cr^\delta$, whence
\[
\int_0^{\epsilon}\esp\psi_{n,r/n}(f)^2\frac{\d r}r \le C \epsilon^\delta,
\]
and hence  \eqref{eq:r large} follows. This completes the proof of Theorem \ref{thm:1}.
\section{Log-correlated bi-fractional Brownian motions  indexed by metric spaces}\label{sec:general}
We introduce a family of log-correlated generalized Gaussian processes indexed by metric spaces $(\MM,\dd)$ under the Assumption \ref{assump:1} and their stable extensions in a unified framework. We also provide a central limit theorem. 

\subsection{A stochastic-integral representation} Let $(E,\calE,\mu)$ and $\{A_x\}_{x\in\MM}\subset\calE$ be as in Assumption \ref{assump:1}.  Let $\M_p(E)$ denote the space of Radon point measures on $E$, and $\calM_p(E)$ its Borel $\sigma$-algebra (see e.g.~\citep{resnick87extreme}). For notational convenience, we write 
\[
\beta = 2H\in(0,1]
\] in this section.
 For each $r>0$, let $\proba_{\beta,r}'\equiv \proba_{\beta,r,\mu}'$ denote the probability measure on $(\M_p(E),\calM_p(E))$ induced by the Poisson point process on $(E,\calE)$ with random intensity measure $\calS_{\beta,r} \cdot \mu$, where $\calS_{\beta,r}$ is a totally skewed $\beta$-stable random variable with 
\[
\esp \exp(-\theta \calS_{\beta,r}) = \exp\pp{-r\theta^\beta}, \theta>0.
\]
(That is, $\calS_{\beta,r}$ is a $\beta$-stable subordinator evaluated at time $r$.)
Note that the special case $\calS_{1,r} = r$ (and this section might be easier to read first assuming $\beta=1$). 
In particular, letting $m$ denote an element in $\M_p(E)$, 
\begin{align*}
\proba'_{\beta,r}(m(A) = k)& \equiv \proba'_{\beta,r}\pp{\ccbb{m\in\mathbb M_p(E):m(A) = k}} \\&=  \esp\pp{\frac{(\calS_{\beta,r}\mu(A))^k}{k!}e^{-\calS_{\beta,r}\mu(A)}}, k\in\N_0, 
\end{align*}
for all $A\in\calE, \mu(A)<\infty$. 
Set
\[
\wt p_{\beta,r}(x):= \proba'_{\beta,r}\pp{m(A)\odd} = \esp\pp{\frac12\pp{1-e^{-2\calS_{\beta,r}\mu(A_x)}}} = \frac12\pp{1-e^{-(2\mu(A_x))^\beta r}}, x\in\MM.
\]
Let $\lambda$ be a Radon measure on $(\MM,\dd)$.  Introduce, for $\alpha\in(0,2]$, 
\[
G_{\alpha,\beta}(f) :=\int_{\R_+\times\M_p(E)}\int_{x\in\MM}\pp{\inddd{m(A_x)\odd}-\wt p_{\beta,r}(x)}f(x)\lambda(\d x)M_{\alpha,\beta}(\d r\d m),
\]
where  $M_{\alpha,\beta}$ is a symmetric $\alpha$-stable random measure on $\R_+\times\M_p(E)$ with control measure $2r\inv\d r\d \proba_{\beta,r}')$, 
and $f\in\calF_{\delta,\delta',\beta}(\MM)$ defined as follows.
\begin{assumption}\label{assump:2}
With $\delta,\delta'>0, \beta\in(0,1]$, we let $\calF_{\delta,\delta',\beta}(\MM)$ denote the class of all bounded and measurable functions $f:\MM\to\R$ (measurable with respect to the Borel $\sigma$-algebra induced by $\dd$) satisfying the following:
\begin{enumerate}[(i)]
\item $\nn f_1:=\int_\MM |f(x)|\lambda(\d x)<\infty$,
\item $\nn f_{1,\delta}:=\int_\MM \mu(A_x)^{\delta} |f(x)|\lambda(\d x)<\infty$,
\item $\int_\MM\int_\MM e^{-\dd^\beta(x,y)r}|f(x)f(y)|\lambda (\d x)\lambda(\d y) \le Cr^{-\delta'}$ for all $r>0$.
\end{enumerate}
\end{assumption}
It is useful to recall that when $\mu(A_o) = 0$ for some $o\in\MM$ (usually playing the role of the origin), we have $\mu(A_x) = \dd(o,x)$. In particular, the above assumption can be viewed as a generalization of the assumption on $f$ in \eqref{eq:f}. 

The following is the main result for log-correlated bi-fractional Brownian motions and their stable extensions indexed by metric spaces, where $\Gamma\topp\beta(x,y)$ is  the same as in Theorem \ref{thm:0}. 
\begin{theorem}\label{thm:Kahane}For each $\alpha\in(0,2], \delta,\delta'>0$, $\{G_{\alpha,\beta}(f)\}_{f\in \calF_{\delta,\delta',\beta}(\MM)}$ is a family of symmetric $\alpha$-stable random variables with
\equh\label{eq:G M}
\esp \exp\pp{\i G_{\alpha,\beta}(f)} = \exp\pp{-2\int_0^\infty\esp'_{\beta,r}\abs{\int_\MM\pp{\inddd{m(A_x)\odd} - \wt p_{\beta,r}(x)}f(x)\lambda(\d x)}^\alpha\frac{\d r}r}.
\eque
In particular, when $\alpha=2$, this is a family of centered Gaussian random variables with
\equh\label{eq:cov}
\cov(G_{2,\beta}(f),G_{2,\beta}(g)) =\int_\MM\int_\MM f(x)g(y)\Gamma\topp\beta(x,y)\lambda(\d x)\lambda(\d y), 
\eque
for all $f,g\in \calF_{\delta,\delta',\beta}(\MM)$, where
\equh\label{eq:sigma positive}
\Gamma\topp\beta(x,y) = \log \frac{\mu^\beta(A_x)+\mu^\beta(A_y)}{\dd^\beta(x,y)} = 4\int_0^\infty \frac{\Gamma_r\topp\beta(x,y)}r\d r,
\eque
with
\begin{multline}\label{eq:Gamma_r'}
\Gamma_r\topp\beta(x,y)\\ := \esp'_{\beta,r}\pp{\pp{\inddd{m(A_x)\odd} - \proba_{\beta,r}'\pp{m(A_x)\odd)}}\pp{\inddd{m(A_y)\odd} - \proba_{\beta,r}'\pp{m(A_y)\odd}}}.
\end{multline}
\end{theorem}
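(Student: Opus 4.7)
The plan is to reduce the whole theorem to one explicit computation of $\Gamma_r\topp\beta(x,y)$, after which the covariance formula \eqref{eq:cov} and the representation \eqref{eq:sigma positive} both follow by Fubini and the Frullani integral \eqref{eq:log cov}. The stable case $\alpha\in(0,2)$ will then be reduced to the Gaussian case via H\"older's inequality. First, I would decompose $A_x$ and $A_y$ into the three disjoint pieces $A_x\cap A_y$, $A_x\setminus A_y$, $A_y\setminus A_x$, noting that $\mu(A_x\setminus A_y)+\mu(A_y\setminus A_x)=\dd(x,y)$. Conditional on $\calS_{\beta,r}$, the counts of points of $m$ in these three pieces are independent Poisson variables. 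Using the identities $\inddd{N\odd}=(1-(-1)^N)/2$ and $\esp(-1)^N=e^{-2\lambda}$ for a Poisson $N$ with mean $\lambda$, and then averaging over $\calS_{\beta,r}$ via its Laplace transform $\esp e^{-\theta \calS_{\beta,r}}=e^{-r\theta^\beta}$, a short algebraic computation in the spirit of \eqref{eq:key1} should give
\[
\Gamma_r\topp\beta(x,y) = \tfrac14\pp{e^{-r(2\dd(x,y))^\beta}-e^{-r\pp{(2\mu(A_x))^\beta+(2\mu(A_y))^\beta}}}.
\]
This is nonnegative because $\dd(x,y)=\mu(A_x\Delta A_y)\le\mu(A_x)+\mu(A_y)$ combined with the subadditivity of $z\mapsto z^\beta$ on $[0,\infty)$ for $\beta\in(0,1]$, which is exactly the $\sigma$-positivity needed. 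Feeding the above into \eqref{eq:log cov} with the two exponents, the factors $2^\beta$ in numerator and denominator cancel, yielding $4\int_0^\infty \Gamma_r\topp\beta(x,y)r\inv\d r=\log\pp{(\mu^\beta(A_x)+\mu^\beta(A_y))/\dd^\beta(x,y)}$, which is \eqref{eq:sigma positive}.

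Next, I would verify the integrability needed to make sense of \eqref{eq:G M}. Start with $\alpha=2$. For small $r$, $1-e^{-u}\le u$ gives $\Gamma_r\topp\beta(x,y)\le \tfrac r4((2\mu(A_x))^\beta+(2\mu(A_y))^\beta)$; after integrating against $|f(x)g(y)|\lambda(\d x)\lambda(\d y)\cdot r\inv\d r$ near zero, the factor $r$ cancels and what remains is bounded by a combination of $\nn f_1$ and $\nn f_{1,\delta}$ via conditions (i)--(ii) of Assumption \ref{assump:2} (taking $\delta=\beta$, or combining the two norms). For large $r$, dropping the negative term bounds $\Gamma_r\topp\beta(x,y)\le \tfrac14 e^{-r(2\dd(x,y))^\beta}$, so condition (iii) of Assumption \ref{assump:2} produces a $\lambda\otimes\lambda$-integral of order $r^{-\delta'}$ (the rescaling $r\mapsto 2^\beta r$ does not alter the order), which is integrable against $\d r/r$ at infinity. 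Thus the exponent in \eqref{eq:G M} is finite when $\alpha=2$. For $\alpha\in(0,2)$, H\"older's inequality $\esp'_{\beta,r}|X|^\alpha\le(\esp'_{\beta,r}|X|^2)^{\alpha/2}$ reduces the stable case to the integrability of $(r\wedge r^{-\delta'})^{\alpha/2}r\inv$ on $(0,\infty)$, which holds for all $\alpha>0$ and $\delta'>0$.

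With finiteness in hand, the characteristic function \eqref{eq:G M} is simply the defining relation \eqref{eq:ch.f} of stochastic integrals with respect to the stable random measure $M_{\alpha,\beta}$. For the covariance in the case $\alpha=2$, the Gaussian stochastic-integral identity together with Fubini gives
\[
\cov(G_{2,\beta}(f),G_{2,\beta}(g))=4\int_0^\infty r\inv\iint_{\MM\times\MM}f(x)g(y)\Gamma_r\topp\beta(x,y)\lambda(\d x)\lambda(\d y)\d r,
\]
and interchanging the $r$-integral with the $\lambda\otimes\lambda$-integral and applying the Frullani identity established in the first paragraph yields \eqref{eq:cov}.

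The main obstacle I anticipate is the careful bookkeeping in applying Fubini twice: first, for each fixed $r$, to push the $\calS_{\beta,r}$-average and the Poisson expectation through the $\lambda\otimes\lambda$-integral and produce $\Gamma_r\topp\beta$; and second, to interchange the $r$-integration with the $\lambda\otimes\lambda$-integration to identify the log-kernel. Both swaps demand absolute integrability, which is exactly what Assumption \ref{assump:2} is engineered to provide via the two-regime bounds described above. Once these legality checks are done, the remaining algebra is routine in the spirit of \eqref{eq:key1}--\eqref{eq:log cov}.
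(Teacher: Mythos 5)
Your outline follows the paper's own proof almost step for step: you compute $\Gamma_r\topp\beta(x,y)=\frac14\pp{e^{-(2\mu(A_x\Delta A_y))^\beta r}-e^{-((2\mu(A_x))^\beta+(2\mu(A_y))^\beta)r}}$ (your parity trick $\inddd{N\odd}=(1-(-1)^N)/2$ is a slicker route to the same identity than the paper's odd/even case analysis), identify the log-kernel by Frullani, bound $\Psi_{2,\beta,f}(r)$ in two regimes of $r$, and reduce $\alpha\in(0,2)$ to $\alpha=2$ by H\"older; your subadditivity argument for $\Gamma_r\topp\beta\ge0$ is a nice explicit justification of the $\sigma$-positivity. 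The one genuine gap is the small-$r$ estimate. From $1-e^{-u}\le u$ you get $\Gamma_r\topp\beta(x,y)\le \frac r4\pp{(2\mu(A_x))^\beta+(2\mu(A_y))^\beta}$, and after integrating against $|f(x)f(y)|\lambda(\d x)\lambda(\d y)$ this bound is only useful if $\int_\MM\mu(A_x)^\beta|f(x)|\lambda(\d x)=\nn f_{1,\beta}<\infty$. Assumption \ref{assump:2} gives only $\nn f_1<\infty$ and $\nn f_{1,\delta}<\infty$ for the prescribed $\delta$, and the theorem is claimed for every $\delta>0$; when $\delta<\beta$ these norms do not control $\nn f_{1,\beta}$ (e.g.\ on $\MM=[0,\infty)$ with $\mu(A_x)=x$, $\beta=1$, take $f(x)=(1+x)^{-1-\delta-\epsilon}$ with $\delta+\epsilon<1$: conditions (i)--(iii) hold but $\nn f_{1,1}=\infty$). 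Your parenthetical remedy (``taking $\delta=\beta$, or combining the two norms'') does not work: $\delta$ is given, not chosen, and $\nn f_1+\nn f_{1,\delta}$ does not dominate $\nn f_{1,\beta}$ when $\delta<\beta$. Moreover, for your $\alpha<2$ reduction you need a quantitative bound $\Psi_{2,\beta,f}(r)\le Cr^{\kappa}$ with $\kappa>0$ near $0$, not merely integrability of the product against $\d r/r$.

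The repair is short and is exactly what the paper does: split the $x$-integral according to $\mu(A_x)>1/r$ (bound the bracket by $1$ and use $1\le r^\delta\mu(A_x)^\delta$, giving a contribution $\le r^\delta\nn f_1\nn f_{1,\delta}$) and $\mu(A_x)\le 1/r$ (keep the linear bound but interpolate $\mu^\beta(A_x)\le (1/r)^{\beta-\delta}\mu^\delta(A_x)$ when $\delta\le\beta$, giving $Cr^{1-\beta+\delta}\le Cr^\delta$ for $r\le 1$); equivalently, replace $1-e^{-u}\le u$ by $1-e^{-u}\le u^{(\delta/\beta)\wedge 1}$ and use subadditivity of $z\mapsto z^{(\delta/\beta)\wedge1}$. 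This yields $\Psi_{2,\beta,f}(r)\le C\pp{r^\delta\wedge r^{-\delta'}}$, after which everything else you wrote (the large-$r$ bound via condition (iii), the H\"older reduction, the isometry giving \eqref{eq:G M}, and the Fubini--Frullani identification of \eqref{eq:cov}--\eqref{eq:sigma positive}) goes through and coincides with the paper's argument.
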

\begin{proof}
This time, the identity \eqref{eq:G M} again follows from the property of stochastic integrals, and we only need to show the finiteness of the right-hand side of \eqref{eq:G M}. Write
\equh\label{eq:Psi a f0}
\Psi_{\alpha,\beta,f}(r) = \esp'_{\beta,r}\abs{\int_\MM\pp{\inddd{m(A_x)\odd} - \wt p_r(x)}f(x)\lambda(\d x)}^\alpha.
\eque
The goal is to show
\equh\label{eq:Psi a f}
\int_0^\infty \frac{\Psi_{\alpha,\beta,f}(r)}r\d r<\infty.
\eque

We first deal with $\alpha=2$. 
We compute
\begin{align*}
\proba'_{\beta,r} & \pp{m(A_x) \odd, m(A_y)\odd} \\
& = \proba'_{\beta,r}\pp{m(A_x\setminus A_y)\odd}\proba'_{\beta,r}\pp{m(A_y\setminus A_x)\odd}\proba'_{\beta,r}\pp{m(A_x\cap A_y)\even}\\
& \quad+ \proba'_{\beta,r}\pp{m(A_x\setminus A_y)\even}\proba'_{\beta,r}\pp{m(A_y\setminus A_x)\even}\proba'_{\beta,r}\pp{m(A_x\cap A_y)\odd}\\
& = \esp'_{\beta,r}\pp{\frac14\pp{1-e^{-2\calS_{\beta,r}\mu(A_x)}-e^{-2\calS_{\beta,r}\mu(A_y)}+e^{-2\calS_{\beta,r}\mu(A_x\Delta A_y)}}}\\
& = \frac14\pp{1-e^{-(2\mu(A_x))^\beta r}-e^{-(2\mu(A_y))^\beta r}+e^{-(2\mu(A_x\Delta A_y))^\beta r}}.
\end{align*}
Then, taking $\Gamma\topp\beta_r(x,y)$ as given in \eqref{eq:Gamma_r'}, we have
\begin{align}\label{eq:Gamma_r''}
\Gamma_r\topp\beta(x,y) &= \proba'_{\beta,r}  \pp{m(A_x) \odd, m(A_y)\odd}  - \proba'_{\beta,r}\pp{m(A_x) \odd}  \proba'_{\beta,r}\pp{ m(A_y)\odd}\\
& =   \frac14\pp{e^{-(2\mu(A_x\Delta A_y))^\beta r} - e^{-((2\mu(A_x))^\beta+(2\mu(A_y))^\beta)r}}\ge 0.\nonumber
\end{align}
Therefore, we have
\begin{align}
& \Psi_{2,\beta,f} (r)\nonumber\\
&  = \esp'_{\beta,r}\int_\MM\int_\MM\pp{\inddd{m(A_x)\odd} - \wt p_{\beta,r}(A_x)}\pp{\inddd{m(A_y)\odd} - \wt p_{\beta,r}(A_y)}f(x)f(y)\lambda(\d x)\lambda(\d y)\nonumber\\
&
= \int_\MM\int_\MM \Gamma_r\topp\beta(x,y)f(x)f(y)\lambda(\d x)\lambda(\d y)\nonumber\\
& = \frac14\int_\MM\int_\MM\pp{e^{-(2\mu(A_x\Delta A_y))^\beta r} - e^{-((2\mu(A_x))^\beta+(2\mu(A_y))^\beta)r}}f(x)f(y)\lambda(\d x)\lambda(\d y). \nonumber
\end{align}
The first equality is the definition, the second follows from Fubini's theorem (since $\nn f_1<\infty$), and the third follows from \eqref{eq:Gamma_r''}.

Next, we prove
\equh\label{eq:Psi 2 f}
\Psi_{2,\beta,f}(r) \le C(r^{\delta}\wedge r^{-\delta'}) \mfa r>0,
\eque
for all $f\in \calF_{\delta,\delta',\beta}(\MM)$ (the constant $C$ depends on $f$). 
For the sake of simplicity, assume that $f\ge 0$. 
We break the outer integral into two parts and proceed as follows. First, 
\begin{align}
\int_{\mu(A_x)> 1/r}&  \int_\MM f(x)f(y) \pp{e^{-(2\mu(A_x\Delta A_y))^\beta r} - e^{-((2\mu(A_x))^\beta+(2\mu(A_y))^\beta)r}}\lambda(\d x)\lambda(\d y)\nonumber\\
 & \le \int_{\mu(A_x)>1/r}\int_\MM f(x)f(y)\lambda(\d x)\lambda(\d y) = \int_{\mu(A_x)>1/r}f(x)\lambda(\d x)\nn f_1 \nonumber\\
 & \le r^\delta \nn f_1\int_{\mu(A_x)>1/r}\mu(A_x)^\delta f(x)\lambda(\d x) \le C \nn f_1 \nn f_{1,\delta}r^\delta.\nonumber
\end{align}

Second,
using
\begin{align*}
e^{-(2\mu(A_x\Delta A_y))^\beta r} - e^{-((2\mu(A_x))^\beta+(2\mu(A_y))^\beta)r}& \le 1- e^{-2^\beta(\mu^\beta(A_x)+\mu^\beta(A_y)-\mu^\beta(A_x\Delta A_y))r}\\
& \le 2^\beta(\mu^\beta(A_x)+\mu^\beta(A_y)-\mu^\beta(A_x\Delta A_y))r \\
&\le 2^{\beta+1}\mu^\beta(A_x)r,
\end{align*}
we have
\begin{align}
\int_{\mu(A_x)\le 1/r}&  \int_\MM f(x)f(y) \pp{e^{-(2\mu(A_x\Delta A_y))^\beta r} - e^{-((2\mu(A_x))^\beta+(2\mu(A_y))^\beta)r}}\lambda(\d x)\lambda(\d y)\nonumber\\
 &\le \nn f_1\int_{\mu(A_x)\le 1/r}2^{\beta+1}r\mu^\beta (A_x)f(x)\lambda(\d x) \nonumber\\
 &\le 
 Cr\int_{\mu(A_x)\le 1/r}(1/r)^{\beta(1-\wt\delta)}\mu^{\beta\wt\delta}(A_x) f(x)\lambda(\d x)\le C r^{1-\beta+\beta\wt\delta}\nn f_{1,\beta\wt\delta},\nonumber
\end{align}
for $\wt\delta\in(0,1)$. Taking $\wt\delta = \delta/\beta$, the above is bounded by $Cr^{1-\beta+\delta}\le C r^\delta$ for $r\in[0,1]$. We have thus proved $\Psi_{2,\beta,f}(r)\le C r^\delta$ for $r\in[0,1]$. 
Alternatively,  using $e^{-(2\mu(A_x\Delta A_y))^\beta r} - e^{-((2\mu(A_x))^\beta+(2\mu(A_y))^\beta)r} \le  e^{-2r\mu(A_x\Delta A_y)} = e^{-2^\beta\dd^\beta(x,y)r}$, we have
\[
|\Psi_{2,\beta,f}(r)|  \le \int_\MM\int_\MM e^{-2^\beta\dd^\beta(x,y) r}f(x)f(y)\lambda(\d y)\lambda(\d x)\le Cr^{-\delta'},
\]
where the last step follows from (iii) of  Assumption \ref{assump:2}. We have thus proved \eqref{eq:Psi 2 f}.

Now \eqref{eq:Psi a f} follows with $\alpha=2$. For $\alpha\in(0,2)$, using $\Psi_{\alpha,\beta,f}(r)\le \Psi_{2,\beta,f}(r)^{\alpha/2}$, \eqref{eq:Psi a f} remains to hold. We have thus proved that \eqref{eq:G M} holds and the right-hand side is finite.

In particular for the case $\alpha=2$,  \eqref{eq:G M}, \eqref{eq:Gamma_r''} and Gamma integral yield
\begin{align*}
\cov(G_{2,\beta}(f),G_{2,\beta}(f)) & = 4 \int_0^\infty\int_\MM\int_\MM f(x)f(y)\Gamma_r\topp\beta(x,y)\lambda(\d x)\lambda(\d y)\frac{\d r}r\\
& =\int_\MM\int_\MM f(x)f(y)\log\frac{\mu^\beta(A_x)+\mu^\beta(A_y)}{\mu^\beta(A_x\Delta A_y)}\lambda(\d x)\lambda(\d y),
\end{align*}
where we used Frullani integral again. 
By Assumption \ref{assump:1}, $\mu(A_x\Delta A_y) = \dd(x,y)$. The stated formula \eqref{eq:cov} for the covariance function now follows.
\end{proof}

\subsection{A central limit theorem}
We consider another aggregated model combining ideas from \citep{enriquez04simple} and \citep{fu20stable} that scales to the generalized process just introduced. We only illustrate the case $H = 1/2$. 

Fix $\alpha\in(0,2]$. 
Let $\calX\topp\alpha$ be a symmetric $\alpha$-stable random variable determined by 
\[
\esp \exp\pp{\i \theta\calX\topp\alpha} = \exp\pp{-|\theta|^\alpha}. 
\]
Let $q$ be a uniform random variable over $(0,1)$ and given $q$ and $n\in\N$, let $N_n\topp q$ denote a Poisson point process on $(E,\calE)$ with intensity measure $nq\cdot\mu$. Assume $\calX\topp\alpha$ is independent from $q$ and $N_n\topp q$. Let $(\calX_i\topp\alpha,q_i,N_{n,i}\topp i)_{i\in\N}$ be i.i.d.~copies of $(\calX\topp\alpha,q,N_n\topp q)$. 
Set
\[
G_{\alpha,n}(f):=\frac1{m_n^{1/\alpha}}\summ i1{m_n}\frac{\calX_i\topp\alpha}{q_i^{1/\alpha}}\int_\MM \pp{\inddd{N_{n,i}\topp{q_i}(A_x)\odd} - \proba\pp{N_{n,i}\topp{q_i}(A_x)\odd\mmid q_i}}f(x)\lambda(\d x),
\]
for all $f\in\calF_{\delta,\delta',1}(\MM)$. This represents an aggregated model based on the triangular arrays of i.i.d.~point processes $(N_{n,i}\topp {q_i})_{n\in\N,i\in\N}$. 
\begin{theorem}
Under the assumption $m_n\gg n$, we have
\[
\limn\esp \exp\pp{\i G_{\alpha,n}(f)} = \esp \exp\pp{\i G_\alpha(f)}, \mfa f\in \calF_{\delta,\delta',1}(\MM).
\]
\end{theorem}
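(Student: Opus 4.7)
The approach mirrors the proof of Theorem \ref{thm:1}, now adapted to the general metric-space setting and to arbitrary $\alpha\in(0,2]$. Set $\psi_{n,q}(f):=\int_\MM(\ind_{N_n\topp q(A_x)\odd}-p_{n,q}(x))f(x)\lambda(\d x)$ with $p_{n,q}(x):=\proba(N_n\topp q(A_x)\odd\mmid q)$, and note the crude uniform bound $|\psi_{n,q}(f)|\le \nn f_1$. Since $(\calX_i\topp\alpha,q_i,N_{n,i}\topp{q_i})_{i\le m_n}$ are i.i.d., $\calX\topp\alpha$ has characteristic function $e^{-|\theta|^\alpha}$, and $\calX\topp\alpha$ is independent of $(q,N_n\topp q)$, conditioning on $(q_i,N_{n,i}\topp{q_i})_{i\le m_n}$ yields
\[
\esp \exp(\i G_{\alpha,n}(f)) = \pp{\int_0^1 \esp\exp\pp{-\frac{|\psi_{n,q}(f)|^\alpha}{m_n q}}\d q}^{m_n}=\pp{1+\frac{v_n(f)}{m_n}}^{m_n},
\]
where $v_n(f)$ is defined by the second equality.

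The key observation, absent in the 1D setting of Theorem \ref{thm:1}, is that the change of variables $q=r/n$ makes $N_n\topp{r/n}$ a Poisson point process on $(E,\calE)$ with intensity $r\mu$, which is exactly the law $\proba'_{1,r}$ of Theorem \ref{thm:Kahane}. Consequently $p_{n,r/n}(x)=\wt p_{1,r}(x)$ and the \emph{exact identity} $\esp|\psi_{n,r/n}(f)|^\alpha=\Psi_{\alpha,1,f}(r)$ holds for every $n\in\N$ and every $r\in(0,n)$; no Poisson limit theorem is required here. Combining this with the elementary inequality $|e^{-x}-1+x|\le x^2/2$ for $x\ge 0$, with the uniform bound on $\psi$, and with the assumption $m_n\gg n$ to make the quadratic remainder negligible, one finds
\[
v_n(f) = -\int_0^n \frac{\Psi_{\alpha,1,f}(r)}{r}\d r + o(1) \qmmas n\to\infty,
\]
up to the multiplicative constant appearing in \eqref{eq:G M}.

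To replace $\int_0^n$ by $\int_0^\infty$ in the limit, split at $\epsilon$ and $\epsilon\inv$. On the compact middle piece $[\epsilon,\epsilon\inv]$, dominated convergence combined with the exact identity above delivers convergence. For the two tails, the same bounds on $\Psi_{2,1,f}$ already established inside the proof of Theorem \ref{thm:Kahane} apply: namely, $\Psi_{2,1,f}(r)\le Cr^\delta$ near $0$ via Assumption \ref{assump:2}(ii), and $\Psi_{2,1,f}(r)\le Cr^{-\delta'}$ for large $r$ via Assumption \ref{assump:2}(iii). Jensen's inequality, valid for $\alpha\le 2$, gives $\Psi_{\alpha,1,f}(r)\le \Psi_{2,1,f}(r)^{\alpha/2}$, so the two tail contributions are dominated by $C(\epsilon^{\alpha\delta/2}+\epsilon^{\alpha\delta'/2})$, which vanishes as $\epsilon\downarrow 0$.

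The main technical point will be justifying the linearization of the exponential uniformly in $r\in(0,n)$, especially near $r=n$, where the tail decay of $\Psi_{2,1,f}(r)$ must be paired carefully with the shrinking factor $n/m_n$ coming from the assumption $m_n\gg n$. Once $v_n(f)$ has been shown to converge to the exponent appearing in \eqref{eq:G M}, the elementary limit $(1+v_n(f)/m_n)^{m_n}\to e^{\lim v_n(f)}$ concludes the argument.
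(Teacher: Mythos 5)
Your proposal is correct and follows essentially the same route as the paper: conditioning on the stable factor to get the product form, the change of variables $q=r/n$ giving the exact Poissonian identity $\esp|\psi_{n,r/n}(f)|^\alpha=\Psi_{\alpha,1,f}(r)$ (no Poisson limit theorem, exactly as in the paper), then splitting at $\epsilon,\epsilon\inv$ with dominated convergence on the middle piece and the bounds $\Psi_{\alpha,1,f}(r)\le\Psi_{2,1,f}(r)^{\alpha/2}\le C(r^\delta\wedge r^{-\delta'})^{\alpha/2}$ from the proof of Theorem \ref{thm:Kahane} on the tails. The only caution is that the intermediate claim $v_n(f)=-\int_0^n\Psi_{\alpha,1,f}(r)r\inv\d r+o(1)$ should not be obtained from a global quadratic-remainder bound (which can diverge near $r=0$ when $\alpha\delta\le 2$); handle the tails with $1-\esp e^{-x}\le \esp x$ as the paper does, which your splitting step already effectively provides.
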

\begin{proof}
Introduce $N_n\topp r$ with $r\in(0,1)$ as a fixed number similarly as $N_n\topp q$. Write $\wt p_{n,r}(A) = (1/2)(1-e^{-2nr\mu(A)})$. We begin by writing
\begin{align*}
\esp e^{\i G_{\alpha,n}(f)} & = \pp{\esp \exp\pp{\frac{\i}{m_n^{1/\alpha}}\frac{\calX\topp\alpha}{q^{1/\alpha}}\int_\MM\pp{\inddd{N_n\topp{q}(A_x)\odd} - \wt p_{n,q}(A_x)}f(x)\lambda(\d x)}}^{m_n}\\
& = \pp{\int_0^1\esp \exp\pp{-\frac1{m_n r}\abs{\int_\MM\pp{\inddd{N_n\topp {r}(A_x)\odd}-\wt p_{n,r}(A_x)}f(x)\lambda(\d x)}^\alpha}\d r}^{m_n}\\
& = \pp{\int_0^n\esp \exp\pp{-\frac n{m_n r}\abs{\int_\MM\pp{\inddd{N_1\topp {r}(A_x)\odd}-\wt p_{1,r}(A_x)}f(x)\lambda(\d x)}^\alpha}\frac{\d r}n}^{m_n},
\end{align*}
where in the last step we first applied change of variables $r\mapsto nr$ and then used the fact that $N_n\topp{r/n}\eqd N_1\topp r$ and $\wt p_{n,r/n}(x) = \wt p_{1,r}(x) = \wt p_r(x)$. That is, with
\[
\what\Psi_{\alpha,f}(r) = \abs{\int_\MM\pp{\inddd{N_1\topp {r}(A_x)\odd}-\wt p_{1,r}(A_x)}f(x)\lambda(\d x)}^\alpha,
\]
we have  shown
\[
\esp \exp\pp{\i G_{\alpha,n}(f)} = \pp{1-\int_0^n{1-\esp\exp\pp{-\frac n{m_nr}\what \Psi_{\alpha,f}(r)}}\frac{\d r}n}^{m_n}.
\]
It suffices to show
\equh\label{eq:limit M}
\int_0^n{1-\esp\exp\pp{-\frac n{m_nr}\what \Psi_{\alpha,f}(r)}}\frac{\d r}n \sim \frac1{m_n}\int_0^\infty \frac{ \esp \what \Psi_{\alpha,f}(r)}r\d r
\eque
as $n\to\infty$. 

Fix $\epsilon\in(0,1)$. First we have 
\equh\label{eq:limit M1}
\int_\epsilon^{\epsilon\inv}{1-\esp\exp\pp{-\frac n{m_nr}\what \Psi_{\alpha,f}(r)}}\frac{\d r}n \sim \frac1{m_n}\int_\epsilon^{\epsilon\inv}\frac{\esp \what\Psi_{\alpha,f}(r)}r\d r.
\eque
This step follows from the bounded convergence theorem with the observation $|\what\Psi_{\alpha,f}(r)|\le (\int_\MM |f(x)|\lambda(\d x))^\alpha<\infty$ almost surely and the assumption $m_n\gg n$. 
Second, we have
\equh\label{eq:limit M2}
\int_{[0,n]\setminus[\epsilon,\epsilon\inv]}{1-\esp\exp\pp{-\frac n{m_nr}\what \Psi_{\alpha,f}(r)}}\frac{\d r}n \le \frac1{m_n}\int_{[0,\epsilon]\cup[\epsilon\inv,\infty)}\frac{\esp\what\Psi_{\alpha,f}(r)}r\d r,
\eque
and (recall $\Psi_{\alpha,1,f}(r)$ in \eqref{eq:Psi a f0})
\equh\label{eq:limit M3}
\esp\what\Psi_{\alpha,f}(r) =  \Psi_{\alpha,1,f}(r) \le  \pp{\Psi_{2,1,f}(r)}^{\alpha/2}\le C(r^\delta\wedge r^{-\delta'})^{\alpha/2},
\eque
for some constant $C$ depending on $f\in\calF_{\delta,\delta',1}(\MM)$, where the last step was shown in \eqref{eq:Psi 2 f}. Now, combining \eqref{eq:limit M1}, \eqref{eq:limit M2} and \eqref{eq:limit M3} and letting $\epsilon\downarrow 0$, the above estimates yield \eqref{eq:limit M}.
\end{proof}
\appendix
\section{An example by Iksanov and Kotelnikova}\label{sec:Sasha}
\citet{iksanov22small} considered a limit theorem for the Karlin model in the regime that the sampling frequencies $(p_j)_{j\in\N}$ decay faster than any polynomial rate (recall that for the Karlin model with $\alpha\in(0,1)$, $p_j\sim Cj^{-1/\alpha}$). Namely, they considered the case that for $\rho(t):=|\{k\in\N:p_k\ge 1/t\}|$, that there exists a slowly varying function $g$ at infinity with $g(x)\to\infty$ as $x\to\infty$ such that $\lim_{x\to\infty}(\rho(\lambda t)- \rho(t))/g(t)= \log\lambda$ for all $\lambda>0$. 
We omit the description of the model (they actually investigated a sophisticated hierarchical Karlin model and the discussions here concern only their results at the first level), but focus on the limit processes. 

In particular, they investigated the scaling limit of the $j$-occupancy process. That is, the counting process for the number of urns containing exactly $j$ after $n$ rounds. The limit process \citep[Eq.~(4)]{iksanov22small} is denoted by $(X_j(u))_{u\in\R}$ below. The parameterization is a little unconventional as the index set is $\R$, although we shall eventually see that applying a time-change $u\mapsto t = \log u\in(0,\infty)$ we can relate $(X_ j)_{j\in\N}$ to the process of interest here $G^{1/2}$.  

Stochastic-integral representations for their limit processes were provided in \citep{iksanov22small}. Some preparations are needed for the Gaussian random measures involved. Let $\proba_{U^\infty}$ denote the uniform probability measure on $[0,1]^\N$ (the distribution of i.i.d.~uniform random variables).  Let $W_\infty$ denote a Gaussian random measure on $\R\times[0,1]^\N$ with intensity measure $\d x\d \proba_{U^\infty}$. For each $j\in\N$, let $W_j$ denote $W_\infty$ restricted to $\R\times[0,1]^j$, that is,
\[
W_j(\d x,\d y_1,\dots,\d y_j) :=W_\infty\pp{\d x,\d y_1,\dots,\d y_j, [0,1]^\N}, x\in\R, y_i\in[0,1], i=1,\dots,j.
\]
Then, the limit $j$-occupancy processes  have the following stochastic-integral representation
\begin{multline}\label{eq:X}
\pp{X_j(u)}_{u\in\R,j\in\N}\\
 = \pp{\int_{\R\times[0,1]^{j+1}}\pp{\inddd{y_1\cdots y_{j+1}<\psi_0(e^{-(x-u)})<y_1\cdots y_j} - \psi_j(e^{-(x-u)})}W_{j+1}(\d x\d y_1\cdots \d y_{j+1})}_{u\in\R,j\in\N},
\end{multline}
where $\psi_j(x) = \proba(N_x = j)$ and $N_x$ is a Poisson random variable with parameter $x>0$ and $j\in\N_0$. 

The processes $(X_j)_{j\in\N}$ also have the following representation. Let $(\Omega',\calF',\proba')$ denote a probability space and $M$ a Gaussian random measure on $(\Omega',\calF')$ with control measure $r\inv \d r\d\proba'$, and $N'$ is a standard Poisson process on $(\Omega',\proba')$. 
\begin{lemma}
We have the following equivalent representation of \eqref{eq:X}:
\[
\pp{X_j(\log t)}_{t\in(0,\infty),j\in\N} \eqd \pp{\int_{\R\times\Omega'}\pp{\inddd{N'(rt) = j} - \proba(N'(rt) = j)}M(\d r \d\omega')}_{t\in(0,\infty), j\in\N}.
\]
\end{lemma}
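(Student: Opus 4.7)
Both sides of the claimed distributional identity are centered Gaussian processes in the joint index $(t,j)\in(0,\infty)\times\N$, being stochastic integrals against Gaussian random measures. It therefore suffices to match covariance kernels at all pairs $(s,j)$ and $(t,k)$, since the finite-dimensional distributions of a centered Gaussian process are determined by the covariance. My plan is (i) to rewrite the Iksanov--Kotelnikova integrand in \eqref{eq:X} as an indicator involving a standard Poisson process built from the uniform variables, and (ii) to match the right-hand side via the change of variable $r=e^{-x}$.

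For step (i), I will use the elementary observation that if $y_1,y_2,\dots$ are i.i.d.~uniform on $(0,1)$ and we set $T_k:=-\log(y_1\cdots y_k)$, then $\{T_k\}_{k\in\N}$ are the arrival times of a standard Poisson process, which I denote $\calN$. Since $\psi_0(z)=e^{-z}$, the event $\ccbb{y_1\cdots y_{j+1}<\psi_0(z)<y_1\cdots y_j}$ is equivalent to $\ccbb{T_j<z<T_{j+1}}=\ccbb{\calN(z)=j}$, and $\psi_j(z)=\proba(\calN(z)=j)$. Setting $u=\log t$ and $z=te^{-x}$, the integrand of $X_j(\log t)$ becomes
\[
\indd{\calN(te^{-x})=j}-\proba\pp{\calN(te^{-x})=j}.
\]

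For step (ii), the covariance formula for the Gaussian random measure $W_\infty$ with control measure $\d x\,\d\proba_{U^\infty}$, after integrating first over the uniform variables, yields
\[
\cov\pp{X_j(\log s),X_k(\log t)}=\int_\R\bb{\proba\pp{\calN(se^{-x})=j,\calN(te^{-x})=k}-\psi_j(se^{-x})\psi_k(te^{-x})}\d x.
\]
The substitution $r=e^{-x}$ transforms $\d x$ into $r\inv\d r$ on $(0,\infty)$, so the last display equals
\[
\int_0^\infty\bb{\proba\pp{\calN(sr)=j,\calN(tr)=k}-\psi_j(sr)\psi_k(tr)}\frac{\d r}r.
\]
This is exactly the covariance of the right-hand side process computed against $M$ with control measure $r\inv\d r\,\d\proba'$, once $N'$ is identified with the standard Poisson process $\calN$. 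Matching covariance kernels across all $(s,j),(t,k)$ concludes the identification.

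The only delicate point is the applicability of Fubini and the integrability of the above kernel at both ends. Near $r\downarrow 0$, expanding $e^{-sr}$ and $e^{-tr}$ shows that both $\psi_j(sr)\psi_k(tr)$ and the joint probability vanish to order $j+k$, with leading terms cancelling, so the integrand is $O(r)$; near $r\to\infty$, both terms decay faster than any polynomial, so integrability against $\d r/r$ is immediate. These estimates are already implicit in the well-definedness of the Iksanov--Kotelnikova process, so no new bound needs to be produced. The main obstacle is purely notational bookkeeping: keeping consistent the convention for the Gaussian random measure on both sides so that no spurious factor of $2$ appears, which is automatic since both $W_\infty$ and $M$ are declared with their control measures in the same symbolic form.
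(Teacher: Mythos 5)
Your proposal is correct and follows essentially the same route as the paper: the key step---reinterpreting $-\log(y_1\cdots y_k)$ as the arrival times of a standard Poisson process so that the event becomes $\{\calN(te^{-x})=j\}$, followed by the change of variables $r=e^{-x}$, $t=e^u$---is exactly the paper's argument, the only difference being that you conclude by matching covariance kernels of the two jointly Gaussian families whereas the paper directly rewrites the stochastic integral against the transformed control measure. One small inaccuracy in a side remark: for $s\le t$ the joint probability $\proba(\calN(sr)=j,\calN(tr)=k)$ vanishes only to order $\max(j,k)$ (not $j+k$, and no cancellation of leading terms is involved), but since $j,k\ge 1$ this still gives an $O(r)$ integrand near $r=0$, so your integrability conclusion stands.
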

\begin{proof}
We first re-write the right-hand side of \eqref{eq:X} in a more probabilistic way. We can view $\wt\Omega = [0,1]^\N$ as a probability space and then $Y_j(\wt\omega) \equiv Y_j((y_k)_{k\in\N}) := y_j$ as i.i.d.~uniform random variables with respect to the law $\proba_{U^\infty}$. Note also that then $E_j:=-\log Y_j$ are i.i.d.~standard exponential random variables. Then, the event in the indicator function in the integrand of the right-hand side of \eqref{eq:X} becomes
\begin{align*}
\ccbb{\prodd k1{j+1}Y_k<\exp\pp{-e^{-(x-u)}}<\prodd k1j Y_k} & = \ccbb{\summ i1{j+1}(-\log Y_i)>e^{-(x-u)}>\summ i1j (-\log Y_i)}\\
 & = \ccbb{\summ i1j E_i<e^{-(x-u)}<\summ i1{j+1}E_i} = \ccbb{N(e^{-(x-u)}) = j},
\end{align*}
where we also set $N(t) = \max\{j\in\N: \summ i1j E_i\le t\}$ and $N(t) = 0$ if the maximum is over an empty set so that $(N(t))_{t\ge 0}$ with respect to $\proba_{U^\infty}$ is a standard Poisson process. That is, we have shown that \eqref{eq:X} can be re-written as
\begin{multline*}
\pp{X_j(u)}_{u\in\R,j\in\N}\\ = \pp{\int_{\R\times[0,1]^\N}\pp{\inddd{N(e^{-(x-u)}) = j} - \proba_{U_\infty}\pp{N\pp{e^{-(x-u)}} = j}}W_\infty(\d x \d \vv y)}_{u\in\R,j\in\N}.
\end{multline*}
Applying the change of variables $r = e^{-x}$  (so $\d x = r\inv\d r$ and the corresponding domain becomes $r\in(0,\infty)$ and also $t = e^u$, we have proved the stated result.  
\end{proof}

Then, with $\wt X_j(t) = X_j(\log t), t\in(0,\infty)$,  $\sif\ell1 \wt X_{2j-1}(t)$ is formally (since this is not point-wise convergent) our generalized Gaussian process: compare with $G^{1/2}(f)$ in \eqref{eq:G(f)} (up to a multiplicative constant as the control measures differ by a multiplicative constant $2$). 

Processes with similar representations often arise as scaling limits of counting processes associated to models with certain renewal structures. See \citet{alsmeyer25decoupled} for a limit theorem on the counting process for a family of decoupled renewal processes. 

\def\cprime{$'$} \def\polhk#1{\setbox0=\hbox{#1}{\ooalign{\hidewidth
  \lower1.5ex\hbox{`}\hidewidth\crcr\unhbox0}}}
  \def\polhk#1{\setbox0=\hbox{#1}{\ooalign{\hidewidth
  \lower1.5ex\hbox{`}\hidewidth\crcr\unhbox0}}}


\begin{thebibliography}{}

\bibitem[Alsmeyer et~al., 2025]{alsmeyer25decoupled}
Alsmeyer, G., Iksanov, A., and Kabluchko, Z. (2025).
\newblock On decoupled standard random walks.
\newblock {\em J. Theoret. Probab.}, 38(1):Paper No. 23, 34.

\bibitem[Bacry and Muzy, 2003]{bacry03log}
Bacry, E. and Muzy, J.~F. (2003).
\newblock Log-infinitely divisible multifractal processes.
\newblock {\em Comm. Math. Phys.}, 236(3):449--475.

\bibitem[Berestycki and Powell, 2024]{berestycki24gaussian}
Berestycki, N. and Powell, E. (2024).
\newblock Gaussian free field and {L}iouville quantum gravity.
\newblock To appear; arXiv preprint arXiv:2404.16642.

\bibitem[Bierm{\'e} et~al., 2010]{bierme10selfsimilar}
Bierm{\'e}, H., Estrade, A., and Kaj, I. (2010).
\newblock Self-similar random fields and rescaled random balls models.
\newblock {\em J. Theoret. Probab.}, 23(4):1110--1141.

\bibitem[Breton and Dombry, 2011]{breton11functional}
Breton, J.-C. and Dombry, C. (2011).
\newblock Functional macroscopic behavior of weighted random ball model.
\newblock {\em ALEA Lat. Am. J. Probab. Math. Stat.}, 8:177--196.

\bibitem[Chhaibi and Najnudel, 2019]{chhaibi19circle}
Chhaibi, R. and Najnudel, J. (2019).
\newblock On the circle, {G}aussian multiplicative chaos and beta ensembles
  match exactly.
\newblock arXiv preprint arXiv:1904.00578.

\bibitem[Cohen and Lifshits, 2012]{cohen12stationary}
Cohen, S. and Lifshits, M.~A. (2012).
\newblock Stationary {G}aussian random fields on hyperbolic spaces and on
  {E}uclidean spheres.
\newblock {\em ESAIM Probab. Stat.}, 16:165--221.

\bibitem[Duplantier et~al., 2017]{duplantier17log}
Duplantier, B., Rhodes, R., Sheffield, S., and Vargas, V. (2017).
\newblock Log-correlated {G}aussian fields: an overview.
\newblock In {\em Geometry, analysis and probability}, volume 310 of {\em
  Progr. Math.}, pages 191--216. Birkh\"auser/Springer, Cham.

\bibitem[Durieu et~al., 2020]{durieu20infinite}
Durieu, O., Samorodnitsky, G., and Wang, Y. (2020).
\newblock From infinite urn schemes to self-similar stable processes.
\newblock {\em Stochastic Process. Appl.}, 130(4):2471--2487.

\bibitem[Durieu and Wang, 2016]{durieu16infinite}
Durieu, O. and Wang, Y. (2016).
\newblock From infinite urn schemes to decompositions of self-similar
  {G}aussian processes.
\newblock {\em Electron. J. Probab.}, 21:Paper No. 43, 23.

\bibitem[Enriquez, 2004]{enriquez04simple}
Enriquez, N. (2004).
\newblock A simple construction of the fractional {B}rownian motion.
\newblock {\em Stochastic Process. Appl.}, 109(2):203--223.

\bibitem[Fernique, 1968]{fernique68generalisations}
Fernique, X. (1968).
\newblock G\'en\'eralisations du th\'eor\`eme de continuit\'e{} de {P}.
  {L}\'evy.
\newblock {\em C. R. Acad. Sci. Paris S\'er. A-B}, 266:A25--A28.

\bibitem[Forde et~al., 2022]{forde22riemann}
Forde, M., Fukasawa, M., Gerhold, S., and Smith, B. (2022).
\newblock The {R}iemann-{L}iouville field and its {GMC} as
  {H{$\longrightarrow$}} 0, and skew flattening for the rough {B}ergomi model.
\newblock {\em Statist. Probab. Lett.}, 181:Paper No. 109265, 13.

\bibitem[Fu and Wang, 2020]{fu20stable}
Fu, Z. and Wang, Y. (2020).
\newblock Stable processes with stationary increments parameterized by metric
  spaces.
\newblock {\em J. Theoret. Probab.}, 33(3):1737--1754.

\bibitem[Fyodorov et~al., 2016]{fyodorov16fractional}
Fyodorov, Y.~V., Khoruzhenko, B.~A., and Simm, N.~J. (2016).
\newblock Fractional {B}rownian motion with {H}urst index {$H=0$} and the
  {G}aussian unitary ensemble.
\newblock {\em Ann. Probab.}, 44(4):2980--3031.

\bibitem[Gangolli, 1967]{gangolli67positive}
Gangolli, R. (1967).
\newblock Positive definite kernels on homogeneous spaces and certain
  stochastic processes related to {L}\'evy's {B}rownian motion of several
  parameters.
\newblock {\em Ann. Inst. H. Poincar\'e Sect. B (N.S.)}, 3:121--226.

\bibitem[Gnedin et~al., 2007]{gnedin07notes}
Gnedin, A., Hansen, B., and Pitman, J. (2007).
\newblock Notes on the occupancy problem with infinitely many boxes: general
  asymptotics and power laws.
\newblock {\em Probab. Surv.}, 4:146--171.

\bibitem[Hager and Neuman, 2022]{hager22multiplicative}
Hager, P. and Neuman, E. (2022).
\newblock The multiplicative chaos of {$H$}=0 fractional {B}rownian fields.
\newblock {\em Ann. Appl. Probab.}, 32(3):2139--2179.

\bibitem[Houdr{\'e} and Villa, 2003]{houdre03example}
Houdr{\'e}, C. and Villa, J. (2003).
\newblock An example of infinite dimensional quasi-helix.
\newblock In {\em Stochastic models ({M}exico {C}ity, 2002)}, volume 336 of
  {\em Contemp. Math.}, pages 195--201. Amer. Math. Soc., Providence, RI.

\bibitem[Hughes et~al., 2001]{hughes01characteristic}
Hughes, C.~P., Keating, J.~P., and O'Connell, N. (2001).
\newblock On the characteristic polynomial of a random unitary matrix.
\newblock {\em Comm. Math. Phys.}, 220(2):429--451.

\bibitem[Iksanov et~al., 2022]{iksanov22functional}
Iksanov, A., Kabluchko, Z., and Kotelnikova, V. (2022).
\newblock A functional limit theorem for nested {K}arlin's occupancy scheme
  generated by discrete {W}eibull-like distributions.
\newblock {\em J. Math. Anal. Appl.}, 507(2):Paper No. 125798, 24.

\bibitem[Iksanov and Kotelnikova, 2022]{iksanov22small}
Iksanov, A. and Kotelnikova, V. (2022).
\newblock Small counts in nested {K}arlin's occupancy scheme generated by
  discrete {W}eibull-like distributions.
\newblock {\em Stochastic Process. Appl.}, 153:283--320.

\bibitem[Istas, 2005]{istas05spherical}
Istas, J. (2005).
\newblock Spherical and hyperbolic fractional {B}rownian motion.
\newblock {\em Electron. Comm. Probab.}, 10:254--262.

\bibitem[Istas, 2006]{istas06fractional}
Istas, J. (2006).
\newblock On fractional stable fields indexed by metric spaces.
\newblock {\em Electron. Comm. Probab.}, 11:242--251.

\bibitem[Istas, 2012]{istas12manifold}
Istas, J. (2012).
\newblock Manifold indexed fractional fields.
\newblock {\em ESAIM Probab. Stat.}, 16:222--276.

\bibitem[Kahane, 1985]{kahane85chaos}
Kahane, J.-P. (1985).
\newblock Sur le chaos multiplicatif.
\newblock {\em Ann. Sci. Math. Qu\'ebec}, 9(2):105--150.

\bibitem[Kaj and Taqqu, 2008]{kaj08convergence}
Kaj, I. and Taqqu, M.~S. (2008).
\newblock Convergence to fractional {B}rownian motion and to the {T}elecom
  process: the integral representation approach.
\newblock In {\em In and out of equilibrium. 2}, volume~60 of {\em Progr.
  Probab.}, pages 383--427. Birkh\"auser, Basel.

\bibitem[Karlin, 1967]{karlin67central}
Karlin, S. (1967).
\newblock Central limit theorems for certain infinite urn schemes.
\newblock {\em J. Math. Mech.}, 17:373--401.

\bibitem[Lambert, 2018]{lambert18mesoscopic}
Lambert, G. (2018).
\newblock Mesoscopic fluctuations for unitary invariant ensembles.
\newblock {\em Electron. J. Probab.}, 23:Paper No. 7, 33.

\bibitem[Lei and Nualart, 2009]{lei09decomposition}
Lei, P. and Nualart, D. (2009).
\newblock A decomposition of the bifractional {B}rownian motion and some
  applications.
\newblock {\em Statist. Probab. Lett.}, 79(5):619--624.

\bibitem[L\'evy, 1965]{levy65processus}
L\'evy, P. (1965).
\newblock {\em Processus stochastiques et mouvement brownien}.
\newblock Suivi d'une note de M. Lo\`eve. Deuxi\`eme \'edition revue et
  augment\'ee. Gauthier-Villars \& Cie, Paris.

\bibitem[Lifshits and Volkova, 2015]{lifshits15bifractional}
Lifshits, M. and Volkova, K. (2015).
\newblock Bifractional {B}rownian motion: existence and border cases.
\newblock {\em ESAIM Probab. Stat.}, 19:766--781.

\bibitem[Ma, 2024]{ma24bifractional}
Ma, C. (2024).
\newblock Bifractional {B}rownian motions on metric spaces.
\newblock {\em J. Theoret. Probab.}, 37(2):1299--1332.

\bibitem[Meyer, 1965]{meyer65theoreme}
Meyer, P.-A. (1965).
\newblock Th{\'e}oreme de continuit{\'e} de {P}. {L\'e}vy sur les espaces
  nucl{\'e}aires.
\newblock {\em S{\'e}minaire Bourbaki}, 311(509-522):66.

\bibitem[Mikosch and Samorodnitsky, 2007]{mikosch07scaling}
Mikosch, T. and Samorodnitsky, G. (2007).
\newblock Scaling limits for cumulative input processes.
\newblock {\em Math. Oper. Res.}, 32(4):890--918.

\bibitem[Molchan, 1967]{molchan67some}
Molchan, G.~M. (1967).
\newblock Some problems connected with the {B}rownian motion of {L}\'{e}vy.
\newblock {\em Teor. Verojatnost. i Primenen.}, 12:747--755.

\bibitem[Neuman and Rosenbaum, 2018]{neuman18fractional}
Neuman, E. and Rosenbaum, M. (2018).
\newblock Fractional {B}rownian motion with zero {H}urst parameter: a rough
  volatility viewpoint.
\newblock {\em Electron. Commun. Probab.}, 23:Paper No. 61, 12.

\bibitem[Peccati and Taqqu, 2011]{peccati11wiener}
Peccati, G. and Taqqu, M.~S. (2011).
\newblock {\em Wiener chaos: moments, cumulants and diagrams}, volume~1 of {\em
  Bocconi \& Springer Series}.
\newblock Springer, Milan; Bocconi University Press, Milan.
\newblock A survey with computer implementation, Supplementary material
  available online.

\bibitem[Pipiras and Taqqu, 2017]{pipiras16long}
Pipiras, V. and Taqqu, M. (2017).
\newblock Long-range dependence and self-similarity.
\newblock Cambridge University Press, Forthcoming in 2017.

\bibitem[Resnick, 1987]{resnick87extreme}
Resnick, S.~I. (1987).
\newblock {\em Extreme values, regular variation, and point processes},
  volume~4 of {\em Applied Probability. A Series of the Applied Probability
  Trust}.
\newblock Springer-Verlag, New York.

\bibitem[Rhodes and Vargas, 2010]{rhodes10multidimensional}
Rhodes, R. and Vargas, V. (2010).
\newblock Multidimensional multifractal random measures.
\newblock {\em Electron. J. Probab.}, 15:no. 9, 241--258.

\bibitem[Rhodes and Vargas, 2014]{rhodes14gaussian}
Rhodes, R. and Vargas, V. (2014).
\newblock Gaussian multiplicative chaos and applications: a review.
\newblock {\em Probab. Surv.}, 11:315--392.

\bibitem[Robertson and Steger, 1998]{robertson98negative}
Robertson, G. and Steger, T. (1998).
\newblock Negative definite kernels and a dynamical characterization of
  property ({T}) for countable groups.
\newblock {\em Ergodic Theory Dynam. Systems}, 18(1):247--253.

\bibitem[Samorodnitsky, 2016]{samorodnitsky16stochastic}
Samorodnitsky, G. (2016).
\newblock {\em Stochastic processes and long range dependence}.
\newblock Springer, Cham, Switzerland.

\bibitem[Samorodnitsky and Taqqu, 1994]{samorodnitsky94stable}
Samorodnitsky, G. and Taqqu, M.~S. (1994).
\newblock {\em Stable non-{G}aussian random processes}.
\newblock Stochastic Modeling. Chapman \& Hall, New York.
\newblock Stochastic models with infinite variance.

\bibitem[Sheffield, 2007]{sheffield07gaussian}
Sheffield, S. (2007).
\newblock Gaussian free fields for mathematicians.
\newblock {\em Probab. Theory Related Fields}, 139(3-4):521--541.

\bibitem[Shen et~al., 2023]{shen23aggregated}
Shen, Y., Wang, Y., and Zhang, N. (2023).
\newblock An aggregated model for {K}arlin stable processes.
\newblock {\em ALEA Lat. Am. J. Probab. Math. Stat.}, 20(2):1187--1210.

\bibitem[Spitzer, 1964]{spitzer64principles}
Spitzer, F. (1964).
\newblock {\em Principles of random walk}.
\newblock The University Series in Higher Mathematics. D. Van Nostrand Co.,
  Inc., Princeton, N.J.-Toronto-London.

\bibitem[Talarczyk, 2020]{talarczyk20bifractional}
Talarczyk, A. (2020).
\newblock Bifractional {B}rownian motion for {$H>1$} and {$2HK\leq 1$}.
\newblock {\em Statist. Probab. Lett.}, 157:108628, 6.

\bibitem[Webb, 2015]{webb15characteristic}
Webb, C. (2015).
\newblock The characteristic polynomial of a random unitary matrix and
  {G}aussian multiplicative chaos---the {$L^2$}-phase.
\newblock {\em Electron. J. Probab.}, 20:no. 104, 21.

\end{thebibliography}
\end{document}